\newcommand{\Mod}[1]{\ (\mathrm{mod}\ #1)}
\newtheorem{theorem}{Theorem}
\newtheorem{proposition}{Proposition}[section]
\newtheorem{lemma}{Lemma}[section]
\newtheorem{corollary}{Corollary}[section]
\theoremstyle{definition}
\newtheorem{defn}{Definition}[section]
\newcommand{\BE}{\mathbb{E}}
\newcommand{\BF}{\mathbb{F}}
\newcommand{\BP}{\mathbb{P}}
\newcommand{\BR}{\mathbb{R}}
\newcommand{\BN}{\mathbb{N}}
\newcommand{\BZ}{\mathbb{Z}}
\theoremstyle{remark}
\newtheorem*{remark}{Remark}
\title{3-wise Independent Random Walks can be Slightly Unbounded}
\author{Shyam Narayanan}
\author{Shyam Narayanan\thanks{\texttt{shyam.s.narayanan@gmail.com}. Massachusetts Institute of Technology. Research supported by Harvard's Herchel Smith Fellowship and the MIT Akamai Fellowship.}}
\begin{document}

\maketitle

\begin{abstract}

    Recently, many streaming algorithms have utilized generalizations of the fact that the expected maximum distance of any $4$-wise independent random walk on a line over $n$ steps is $O(\sqrt{n})$. In this paper, we show that $4$-wise independence is required for all of these algorithms, by constructing a $3$-wise independent random walk with expected maximum distance $\Omega(\sqrt{n} \lg n)$ from the origin. We prove that this bound is tight for the first and second moment, and also extract a surprising matrix inequality from these results.
    
    Next, we consider a generalization where the steps $X_i$ are $k$-wise independent random variables with bounded $p$th moments. For general $k, p$, we determine the (asymptotically) maximum possible $p$th moment of the supremum of $X_1 + \dots + X_i$ over $1 \le i \le n$. We highlight the case $k = 4, p = 2$: here, we prove that the second moment of the furthest distance traveled is $O(\sum X_i^2)$. For this case, we only need the $X_i$'s to have bounded second moments and do not even need the $X_i$'s to be identically distributed. This implies an asymptotically stronger statement than Kolmogorov's maximal inequality that requires only $4$-wise independent random variables, and generalizes a recent result of B{\l}asiok.
\end{abstract}

\textbf{Keywords:} Random walk, $k$-wise independence, moments

\newpage

\section{Introduction}

Random walks are well-studied stochastic processes with numerous applications. The simplest such random walk is the random walk on $\BZ$, a process that starts at $0$ and at each step independently of all previous moves moves either $+1$ or $-1$ with equal probability. In this paper, we do not study this random walk but instead study $k$-wise independent random walks, meaning that steps are not totally independent but that any $k$ steps are completely independent. In many low-space randomized algorithms, information is tracked with processes similar to random walks, but simulating a totally random walk of $n$ steps is known to require $O(n)$ bits while there exist $k$-wise independent families which can be simulated with $O(k \lg n)$ bits \cite{kWise}. As a result, understanding properties of $k$-wise independent random walks have applications to streaming algorithms, such as heavy-hitters \cite{BPTree, CountSieve}, distinct elements \cite{Jarek}, and $\ell_p$ tracking \cite{LpContinuousMonitor}.

For any $k$-wise independent random walk, where $k \ge 2$, it is well-known that after $n$ steps, the expected squared distance from the origin is exactly $n$, since $\BE_{h \sim \mathcal{H}} (h(1)+\dots+h(n))^2 = n$ for any $2$-wise independent hash family $\mathcal{H}$. One can see this by expanding and applying linearity of expectation. This property provides good bounds for the distribution of the final position of a $2$-wise independent random walk. However, we study the problem of bounding the position throughout the random walk, by providing comparable moment bounds for $\sup_{1 \le i \le n} |h(1)+\dots+h(i)|$ rather than just for $|h(1)+\dots+h(n)|$. Importantly, we determine an example of a $2$-wise independent random walk where the expected bounds do not hold, even though very strong bounds for even $4$-wise independent random walks can be established.

Two more general questions that have been studied in the context of certain streaming algorithms are random walks corresponding to insertion-only streams, and random walks with step sizes corresponding to random variables. These are useful generalizations as the first proves useful in certain algorithms with insertion stream inputs, and the second allows for a setup similar to Kolmogorov's inequality \cite{Kolmogorov}, which we will generalize to $4$-wise independent random variables. To understand these two generalizations, consider a $k$-wise independent family of random variables $X_1, \dots, X_n$ and an insertion stream $p_1, \dots, p_m \in [n]$, where now seeing $p_j$ means that our random walk moves by $X_{p_j}$ on the $j$th step. The insertion stream can be thought of as keeping track of a vector $z$ in $\BR^n$ where seeing $p_j$ increments the $p_j$th component of $z$ by $1$, and $\vec{X}$ can be thought of as a vector in $\BR^n$ with $i$th component $X_i.$  Then, one goal is to bound, for appropriate values of $p$,
\[\BE_{h \sim \mathcal{H}} \left[\sup\limits_{1 \le t \le m} \left|\langle \vec{X}, z^{(t)}\rangle\right|^{p} \right],\]
where $z^{(t)}$ is the vector $z$ after seeing only the first $t$ elements of the insertion stream. Notice that bounding the $k'$th moment of the furthest distance from the origin in a $k$-wise independent random walk is the special case of $m = n$, $p_j = j$ for all $1 \le j \le n,$ and the $X_i$'s are $k$-wise independent random signs.

\subsection{Definitions} \label{Definitions}

To formally describe our main results, we need to formally define $k$-wise independent random variables, $k$-wise independent hash families, and $k$-wise independent random walks.

\begin{defn}
    For an integer $k \ge 2,$ we say that random variables $X_1, \dots, X_n$ are \emph{k-wise independent} if for any $1 \le i_1 < i_2 < \dots < i_k \le n,$ the variables $X_{i_1}, \dots, X_{i_k}$ are totally independent. Moreover, if $\BE[X_i] = 0$ for all $i$, we say that $X_1, \dots, X_n$ are \emph{unbiased k-wise independent} random variables.
\end{defn}

\begin{defn}
    For an integer $k \ge 2,$ we say that $\mathcal{H}$, which is a distribution over functions $h: \{1, \dots, n\} \to \{-1, 1\}$ is a \emph{k-wise independent hash family} from $\{1, \dots, n\}$ to $\{-1, 1\}$ if the random variables $h(1), \dots, h(n)$ for $h \sim \mathcal{H}$ are unbiased $k$-wise independent random variables.
\end{defn}

For simplicity of notation, we will often treat $h$ as a vector in $\{-1, 1\}^n,$ with $h_i := h(i).$ From now on, we will also use $[n]$ to denote the set $\{1, \dots, n\}.$

\begin{defn}
    Suppose $h: [n] \to \{-1, 1\}$ is drawn from a $k$-wise independent hash family. Then, we say that the random variables $Z_0 = 0$ and $Z_t = h_1 + \dots + h_t$ for $1 \le t \le n$ is a \emph{$k$-wise independent random walk}. Equivalently, a $k$-wise independent random walk is a series of random variables $Z_0, Z_1, Z_2, \dots, Z_n$ where $Z_0 = 0$ and the variables $\{Z_1-Z_0, Z_2-Z_1, \dots, Z_n-Z_{n-1}\}$ are unbiased $k$-wise independent random variables that always equal $\pm 1$.
\end{defn}

\begin{remark}
    We will sometimes refer to $2$-wise independent hash families or random walks as \emph{pairwise independent} hash families or random walks.
\end{remark}

We also formally define an insertion-only stream.

\begin{defn}
    An \emph{insertion-only stream} $p_1, \dots, p_m$ is a sequence of integers where each $p_i \in [n].$ The insertion-only stream will keep track of a vector $z$ that starts at the origin and after the $j$th step, increments its $p_j$th component by $1$. In other words, for any $0 \le t \le m,$ $z^{(t)},$ i.e. the position of $z$ after $t$ steps, will be the vector in $\BR^n$ such that $z^{(t)}_i = |\{j \le t: p_j = i\}|.$
\end{defn}

\subsection{Main Results} \label{MainResults}

Intuitively, even in a pairwise independent random walk, since the positions at various times have strong correlations with each other, the expectation of the furthest distance traveled from the origin should not be much more than the expectation of than the distance from the origin after $n$ steps, which is well-known to be $O(\sqrt{n})$. But surprisingly, we show in Section \ref{LowerBounds} that one can have a 3-wise independent random walk with expected maximum distance $\Omega(\sqrt{n} \log n)$: in fact, our random walk will even have maximum distance $\Omega(\sqrt{n} \log n)$ with constant probability. Formally, we prove the following theorem:
\begin{theorem} \label{Bound1}
There exists an unbiased 3-wise independent hash family $\mathcal{H}$ from $[n]$ to $\{-1, 1\}$ such that
\begin{equation}
    \BE_{h \sim \mathcal{H}} \left[\sup\limits_{1 \le t \le n} \left|h_1+\dots+h_t\right|\right] = \Omega\left(\sqrt{n} \lg n\right).
\end{equation}
In fact, $\mathcal{H}$ can even have $\BP_{h \sim \mathcal{H}}\left(\sup_{1 \le t \le n} |h_1 + \dots + h_t| = \Omega(\sqrt{n} \log n)\right)$ be at least some constant.
\end{theorem}

Furthermore, this bound of $\sqrt{n} \lg n$ is tight up to the first and second moments for pairwise (and thus $3$-wise) random walks, due to the following result we prove in Section \ref{PairwiseUpperBound}.
\begin{theorem} \label{Bound2}
For any unbiased pairwise independent family $\mathcal{H}$ from $[n]$ to $\{-1, 1\}$,
\begin{equation}
    \BE_{h \sim \mathcal{H}} \left[\sup\limits_{1 \le t \le n} \left(h_1+\dots+h_t\right)^2\right] = O\left(n \lg^2 n\right).
\end{equation}
\end{theorem}

In section \ref{UpperBounds}, we bound random walks corresponding to insertion-only streams and random walks with step sizes that are not necessarily uniform $\pm 1$ variables. We first generalize Kolmogorov's inequality \cite{Kolmogorov} to $4$-wise independent random variables via the following theorem.

\begin{theorem} \label{Bound3}
For any unbiased $4$-wise independent random variables $X_1, \dots, X_n$, each with finite variance,
\begin{equation}
    \BE \left(\sup\limits_{1 \le i \le n} |X_1+\dots+X_i|^2 \right) = O\left(\sum \BE[X_i^2]\right).
\end{equation}
\end{theorem}
Applying Chebyshev's inequality gives us Kolmogorov's inequality up to a constant. We remark that if we remove the $4$-wise independence assumption, Theorem \ref{Bound3} can be more simply proved as a consequence of the Skorokhod embedding theorem \cite{Skorokhod}. This weaker version of Theorem \ref{Bound3} may exist in the literature, though as I am not aware of it, I provide a short proof in Appendix \ref{Sheffield}.

Next, we generalize Theorem \ref{Bound2} to more general random variables and insertion-only streams.
\begin{theorem} \label{Bound4}
For any unbiased pairwise independent variables $X_1, \dots, X_n$, each satisfying $\BE[X_i^2] \le 1,$ and for any insertion stream $p_1, \dots, p_m \in [n],$
\begin{equation}
    \BE \left[\sup\limits_{1 \le t \le m} \left|\langle \vec{X}, z^{(t)}\rangle\right|^2\right] = O\left(\|z\|_2^2 \lg^2 m\right)
\end{equation}
where $z = z^{(m)}$ is the final position of the vector and $\vec{X} = (X_1, \dots, X_n)$.
\end{theorem}

Finally, we prove an analogous theorem for $k$-wise independent random variables for even $k \ge 4.$ In this case, the log factors will disappear.
\begin{theorem} \label{Bound5}
For any even $k \ge 4$, any unbiased $k$-wise independent random variables $X_1, \dots, X_n$ such that $\BE[X_i^k] \le 1$ for all $i$, and any insertion stream $p_1, \dots, p_m \in [n],$
\begin{equation}
    \BE \left[\sup\limits_{1 \le t \le m} \left|\langle \vec{X}, z^{(t)}\rangle\right|^k\right] = O(k)^{k/2} \cdot \|z\|_2^k.
\end{equation}
\end{theorem}
We note that as $k$ grows, this is tight, as even $\BE \left[\langle \vec{X}, z^{(m)}\rangle^k\right] = \Theta(k)^{k/2} \cdot \|z\|_2^k$ if $X_1, \dots, X_n$ are totally independent $\pm 1$-valued random variables, as shown in \cite{KhintchineGeneral}.

Theorems \ref{Bound3}, \ref{Bound4}, and \ref{Bound5} are interesting together as they provide various bounds on the supremum of generalized random walks under differing moment bounds and degrees of independence. We also note that the above theorems can be used to prove the following result, which almost completely characterizes all moment bounds of the furthest distance traveled by a $k$-wise independent random walk with steps in $\pm 1$.

\begin{theorem} \label{Main}
    Let $q \ge p \ge 1,$ $k \ge 2$, and $n \ge 1$ be positive integers. If $X_1, \dots, X_n$ are unbiased $k$-wise independent random variables with $\BE[|X_i|^q] \le 1$ for all $i$, then 
\begin{equation} \label{Bound6}
    \BE\left[\sup_{1 \le i \le n} |X_1 + \dots + X_i|^p\right] = O\left(\sqrt{p \cdot n}\right)^p
\end{equation}
    whenever $k \ge 4, q \ge 2,$ and $2 \lfloor \frac{k}{2} \rfloor \ge p.$ If $k \le 3$ and $p \le 2 \le q,$ then 
\begin{equation} \label{Bound7}
\BE\left[\sup_{1 \le i \le n} |X_1 + \dots + X_i|^p\right] = O\left(\sqrt{n} \log n\right)^p.
\end{equation}
    The above bounds are tight for certain $\pm 1$ random variables, for all instances of $p, q, k$ and sufficiently large $n$.
    
    Finally, in all other cases, it is even possible that
\begin{equation} \label{Bound8}
\BE\left[|X_1 + \dots + X_n|^p\right] = \Omega(n)^{(p+1)/2},
\end{equation}
    making this case less interesting.
\end{theorem}

As the proof of Theorem \ref{Main} is relatively simple given the other results, and gives an overarching viewpoint of all the results, we prove it here, assuming Theorems \ref{Bound1}, \ref{Bound2}, \ref{Bound3}, \ref{Bound4}, and \ref{Bound5}. We defer the final case (i.e. Equation \eqref{Bound8}) to Appendix \ref{FinishFullTheorem}.

\begin{proof}
    First, consider the case $k \ge 4, q \ge 2,$ and $2 \lfloor \frac{k}{2} \rfloor \ge p.$ If $p \le 2,$ then note that $X_1, \dots, X_n$ are $4$-wise independent with $\BE[X_i^2]^{1/2} \le \BE[|X_i|^q]^{1/q} \le 1$ for all $i$. Thus, by Theorem \ref{Bound3}, we have $\BE[\sup (X_1+\dots+X_i)^2] = O(n),$ and $\BE[\sup |X_1+\dots+X_i|] \le \BE[\sup (X_1+\dots+X_i)^2]^{1/2} = O(\sqrt{n})$. If $p \ge 3,$ then let $r = 2 \lceil \frac{p}{2}\rceil,$ so $k \ge r \ge p$ and $r$ is even. Then, $X_1, \dots, X_n$ are $r$-wise independent, so $\BE[\sup |X_1+\dots+X_i|^p]^{1/p} \le \BE[\sup |X_1+\dots+X_i|^r]^{1/r} = O(\sqrt{r \cdot n}).$ However, $r = O(p),$ so $\BE[\sup |X_1+\dots+X_i|^p]^{1/p} = O(\sqrt{p \cdot n}).$ This proves the first case. We note that this inequality is sharp since even in the case where $X_1, \dots, X_n$ are i.i.d. uniformly random sign variables, $\BE[|X_1+\dots+X_i|^p] = \Theta(\sqrt{p \cdot n})^p$ by Khintchine's inequality \cite{Khintchine}.
    
    Next, consider the case where $k \le 3$ and $p \le 2 \le q.$ Then, $\BE[X_i^2]^{1/2} \le \BE[|X_i|^q]^{1/q} \le 1$ for all $i$, and since $k \ge 2,$ we have $X_1, \dots, X_n$ are pairwise independent. Thus, by Theorem \ref{Bound2}, we have that $\BE[\sup (X_1+\dots+X_i)^2] = O(n \log^2 n)$ and $\BE[\sup |X_1+\dots+X_i|] \le \BE[\sup (X_1+\dots+X_i)^2]^{1/2} = O(\sqrt{n} \log n).$ This proves the second case. We note that this inequality is sharp since by Theorem \ref{Bound1}, there exist $3$-wise independent $X_1, \dots, X_n$ such that each $X_i$ is a uniformly random sign variable, but $\BE[\sup (X_1+\dots+X_i)^2]^{1/2} \ge \BE[\sup |X_1+\dots+X_i|] = \Omega(\sqrt{n} \log n).$
\end{proof}

Finally, we note that to prove Theorem \ref{Bound1}, we create a complicated pairwise independent hash function, which suggests that standard pairwise independent hash functions do not have this property. Indeed, we show in Appendix \ref{Appendix} that for some types of hash functions constructed from Hadamard matrices or random linear functions from $\BZ/p\BZ \to \BZ/p\BZ,$ we have $\BE \sup_i |h_1 + \cdots + h_i| = O(\sqrt{n}).$

\subsection{Motivation and Relation to Previous Work}

The primary motivation of this paper comes from certain theorems that provide strong bounds for certain variants of $4$-wise independent random walks, which raised the question of whether any of these bounds can be extended to $2$-wise or $3$-wise independence. For example, Theorem 1 in \cite{BPTree} proves for any unbiased $4$-wise independent hash family $\mathcal{H}$, $\BE_{h \sim \mathcal{H}} \left(\sup_t \langle h, z^{(t)}\rangle\right) = O(\|z\|_2).$  This result generalizes a result from \cite{CountSieve} which proves the same result if $h$ is uniformly chosen from $\{-1, 1\}^n.$  \cite{BPTree} provides an algorithm that successfully finds all $\ell_2$ $\varepsilon$-heavy hitters in an insertion-only stream in $O(\varepsilon^{-2} \log \varepsilon^{-1} \log n)$ bits of space, in which the above result was crucial for analysis of a subroutine which attempts to find bit-by-bit the index of a single ``super-heavy'' heavy hitter if one exists. Theorem 1 in \cite{BPTree} also proved valuable for an algorithm for continuous monitoring of $\ell_p$ norms in insertion-only data streams \cite{LpContinuousMonitor}. Lemma 18 in \cite{Jarek} shows that even without bounded fourth moments, given $4$-wise independent random variables $X_1, \dots, X_n$, each with mean $0$ and finite variance,
\[\BP\left(\max\limits_{1 \le i \le n} |X_1+\dots+X_i| \ge \lambda\right) = O\left(\frac{n \cdot \max_i \BE[X_i^2]}{\lambda^2}\right).\]
This theorem was crucial in analyzing an algorithm tracking distinct elements that provides a $(1+\varepsilon)$-approximation with failure probability $\delta$ in $O(\varepsilon^{-2} \lg \delta^{-1} + \lg n)$ bits of space. Notice that Theorem \ref{Bound3} is stronger than the above equation and implies Kolmogorov's inequality up to a constant factor, but requires much weaker assumptions.

A natural follow-up question to the above theorems is whether $4$-wise independence is necessary, or whether more limited levels of independence such as $2$-wise or $3$-wise independence is allowable. Theorem \ref{Bound1} shows that $3$-wise independence does not suffice for any of the above results, because the random walk on a line case is strictly weaker than all of the above results. As a result, we know that the tracking sketches in \cite{BPTree, LpContinuousMonitor, Jarek} require $4$-wise independence.

However, the results given still have interesting extensions, such as to higher moments. For instance, Theorem \ref{Bound5} shows a stronger result than the one established in \cite{BPTree}, since it not only bounds the first moment of $\sup_t \langle h, z^{(t)}\rangle$ for $h$ sampled from a $4$-wise independent family of uniform $\pm 1$ variables but also bounds the $4$th moment tightly. 

Outside of random walks, $k$-wise independence for hash functions, which was first introduced in \cite{kWise}, has been crucial to numerous algorithms. Bounding the amount of independence required for analysis of algorithms has been studied in various contexts, often since $k$-wise independent hash families can be stored in low space but may provide equally adequate bounds as totally independent families. As further examples, the well-known AMS sketch \cite{AMS} is a streaming algorithm to estimate the $\ell_2$ norm of a vector $z$ to a factor of $1 \pm \varepsilon$ with high probability by multiplying the vector by a sketch matrix $\Pi \in \BR^{n \times (1/\varepsilon^2)}$ of $4$-wise independent random signs and using $\|\Pi z\|_2$ as an estimate for $\|z\|_2.$  It is known from \cite{LinearProbing, Private} that the accuracy of the AMS sketch can be much worse if $3$-wise independent random signs are used instead of $4$-wise independent random signs. If $z$ is given as an insertion stream, it is currently known that the AMS sketch with $8$-wise independent random signs can provide weak tracking \cite{BPTree}, meaning that $\BE \sup_t \left|\|\Pi z^{(t)}\|_2^2 - \|z^{(t)}\|_2^2\right| \le \varepsilon \|z\|_2^2$. This implies that the approximation of the $\ell_2$ norm with the $8$-wise independent AMS sketch is quite accurate at all times $t$. While one cannot perform weak tracking with $3$-wise independence of the AMS sketch, it is unknown for $4$-wise independence through $7$-wise independence whether the AMS sketch provides weak tracking. Finally, linear probing, a well-known implementation of hash tables, was shown to take $O(1)$ expected update time with any $5$-wise independent hash function \cite{Pagh} but was shown to take $\Theta(\lg n)$ expected update time for certain $4$-wise independent hash functions and $\Theta(\sqrt{n})$ expected update time for certain $2$-wise independent hash functions \cite{LinearProbing}.

Bounding the maximum distance traveled of a random walk has also been studied in probability theory independent of computer science applications, both when the steps are totally independent or $k$-wise independent. For example, Kolmogorov's inequality \cite{Kolmogorov} provides strong bounds for $\sup_t (X_1+\dots+X_t)$ for independent random variables $X_1, \dots, X_t$, as long as the second moments of $X_1, \dots, X_t$ are finite. \cite{RandomWalkBounded} constructed an infinite sequence $\{X_1, X_2, \dots\}$ of pairwise independent random variables taking on the values $\pm 1$ such that $\sup_t (X_1+\dots+X_t)$ is bounded almost surely, though the paper also proved that this phenomenon can never occur for $4$-wise independent variables taking on the values $\pm 1$. Finally, the supremum of a random walk with i.i.d. bounded random variable steps was studied in \cite{SupremumSumRVs}, which provided comparisons with the supremum of a Brownian motion random walk regardless of the random variable chosen for step size.

\subsection{Overview of Proof Ideas}
Here, we briefly outline some of the main ideas behind the proofs of our theorems.

The main goal in Section \ref{LowerBounds} is to establish Theorem \ref{Bound1}, i.e. construct a $3$-wise independent hash family $\mathcal{H}$ such that $\BE_{h \sim \mathcal{H}}\left[\sup_{1 \le i \le n} |h_1+\dots+h_i|\right] = \Omega(\sqrt{n} \lg n)$. To have $3$-wise independence, it will suffice to find a distribution $\mathcal{H}$ over $\{-1, 1\}^n$ so that $\BE_{h \sim \mathcal{H}}[h_ih_j] = 0$ for all $i \neq j$: it will be easy to show that if we replace $(h_1, \dots, h_n)$ with $(-h_1, \dots, -h_n)$ with $1/2$ probability, the hash function will be $3$-wise independent. In other words, we wish for the covariance matrix $\mathcal{M} = \BE[h^T h]$ to be the identity matrix $I_n$. The construction has two major steps.
\begin{enumerate}
    \item Create a hash function such that $\BE \sup_{1 \le i \le n} |h_1+\dots+h_i| = \Omega(\sqrt{n} \lg n)$ but rather than have $\BE[h_ih_j] = 0$ for all $i \neq j,$ have $\sum_{i \neq j} |\BE[h_ih_j]| = O(n),$ i.e. the cross terms in total aren't very large in absolute value (this hash function will be $\mathcal{H}_2$ in our proof). To do this, we first created $\mathcal{H}_1,$ which certain properties, most notably that $\BE[h_1 + \dots + h_n] = 0$ but $\BE[h_1 + \dots + h_{n/2}] = \Theta(\sqrt{n} \lg n)$, and rotated the hash family by a uniform index. The rotation allows many of the cross terms to average out, reducing the sum of their absolute values.
    \item Remove the cross terms. To do this, we make $\mathcal{H}$ a hash family where with some constant probability, we choose from $\mathcal{H}_2$ and with some probability, we choose some set of indices and pick a hash function such that $\BE[h_ih_j]$ will be the opposite sign of $\BE_{h \sim \mathcal{H}_2}[h_ih_j]$ for certain indices $i, j$, so that overall, $\BE[h_ih_j]$ will be $0$. Certain symmetry properties and most importantly the fact that $\sum_{i \neq j} |\BE_{h \sim \mathcal{H}_2} [h_ih_j]| = O(n)$ will allow for us to choose from $\mathcal{H}_2$ with constant probability, which means even for our final hash function,  $\BE \sup_{1 \le i \le n} |h_1+\dots+h_i| = \Omega(\sqrt{n} \lg n)$.
\end{enumerate}

The goal of Section \ref{PairwiseUpperBound} is to establish Theorem \ref{Bound2}, i.e. to show that if $\mathcal{M} = \BE[h h^T] = I_n,$ which is true for any pairwise independent hash function, then $\sup_{1 \le i \le n} |h_1+\dots+h_i|^2 = O(n \lg^2 n).$  To do this, we apply ideas based on the second moment method. We notice that for any matrix $A$, $\BE[h^T A h] = Tr(A),$ and thus, if we can find a matrix such that the trace of the matrix is small, but $h^T A h$ is reasonably large in comparison to $\sup_{1 \le i \le n} |h_1+\dots+h_i|^2,$ then $\BE[h^T A h]$ is small but is large in comparison to $\BE\left[\sup_{1 \le i \le n} |h_1+\dots+h_i|^2\right].$  If we assume that $n$ is a power of $2$, then the matrix that corresponds to the quadratic form 
\[h^T A h = \sum\limits_{r = 0}^{\lg n} \sum\limits_{i = 0}^{(n/2^r) - 1} (h_{i \cdot 2^r + 1} + \dots + h_{(i+1) \cdot 2^r})^2,\]
i.e. $h^T A h = h_1^2 + \dots + h_n^2 + (h_1 + h_2)^2 + \dots + (h_{n-1}+h_n)^2 + \dots + (h_1+\dots+h_n)^2$ can be shown to satisfy $Tr(A) = n \lg n$ and for any vector $x,$ $x^T A x \ge \frac{1}{\lg n} \cdot (x_1+\dots+x_i)^2$ for all $1 \le i \le n,$ not just in expectation. These conditions will happen to be sufficient for our goals. This method, in combination with Theorem \ref{Bound1}, will also allow us to prove an interesting matrix inequality, proven at the end of Section \ref{PairwiseUpperBound}. The method above actually generalizes to looking at $k$th moments of $k$-wise independent hash functions, as well as random walks corresponding to tracking insertion-only streams, and will allow us to prove Theorems \ref{Bound4} and \ref{Bound5}. However, these generalizations will also need the construction of $\varepsilon$-nets, which are explained in subsection \ref{Equation45}, or in \cite{JelaniChaining}.

We finally explain the ideas behind Theorem \ref{Bound3}, the generalization of Kolmogorov's inequality and Lemma 18 of \cite{Jarek}. We use ideas of chaining, such as in \cite{JelaniChaining}, and an idea of \cite{Jarek} that allows us to bound the minimum of $X_{i+1}+\dots+X_j$ and $X_{j+1}+\dots+X_k$ where $i < j < k,$ given $4$-wise independent functions $X_1, \dots, X_n$ with only bounded second moments. We combine these with another idea, that we can consider distances between $i$ and $j$ for $1 \le i < j \le n$ as $\BE[X_{i+1}^2 + \dots + X_j^2]$ and create our $\varepsilon$-nets using this distancing. 
The final additional idea is similar to the main idea of Theorem \ref{Bound2}: we write a degree $4$ function of $X_1, \dots, X_n$ based on the structure of the $\varepsilon$-nets and show that $\sup (X_1 + \dots + X_i)^2$ can be bounded by this degree $4$ function. This way, we get a bound on $\BE[\sup (X_1 + \dots + X_i)^2]$ rather than on the Chebyshev variant of it.
Together, these ideas allow for our chaining method to be quite effective, even if the $X_i$'s do not have bounded $4$th moments or if the $X_i$'s wildly differ in variance.

As a note, while \cite{BPTree} only bounds the first moment for the $4$-wise independent random walk, their approach can be easily modified to bound the $(k-\varepsilon)^{\text{th}}$ moment of a $k$-wise independent random walk for any $\varepsilon > 0$. Therefore, our main contribution is to improve this to the full $k^{\text{th}}$ moment.

\section{Lower Bounds for Pairwise Independence} \label{LowerBounds}

In this section, we prove Theorem \ref{Bound1}. In other words, we construct a $3$-wise independent family $\mathcal{H}$ such that the furthest distance traveled by the random walk is $\Omega(\sqrt{n} \lg n)$ in expected value.

To actually construct this counterexample, we proceed by a series of families and tweak each family accordingly to get to the next one, until we get the desired $\mathcal{H}$. We assume $n$ is a power of $4$: this assumption can be removed by replacing $n$ with the largest power of $4$ less than or equal to $n$.

First, we will need the following simple proposition about unbiased $k$-wise independent variables.

\begin{proposition} \label{Boolean}
    Let $\mathcal{H}$ be a distribution over $\{-1, 1\}^n$ such that $\BE_{h \sim \mathcal{H}} [h_i] = 0$ for all $i$. Then, $\mathcal{H}$ is a $k$-wise independent hash family if and only if for all $1 \le \ell \le k$ and all $1 \le i_1 < \cdots < i_\ell \le n,$ $\BE_{h \sim \mathcal{H}}[h_{i_1} \cdots h_{i_\ell}] = 0.$
\end{proposition}

\begin{proof}
    For the ``only if'' direction, if $h_1, \dots, h_n$ are $k$-wise independent for $h \sim H$, then $h_1, \dots, h_n$ are $\ell$-wise independent for all $\ell \le k.$ Therefore, $\BE[h_{i_1} \cdots h_{i_\ell}] = \BE[h_{i_1}] \cdots \BE[h_{i_\ell}] = 0.$
    
    For the ``if'' direction, let $1 \le i_1 < \cdots < i_k \le n$ and let $a_1, \dots, a_k \in \{-1, 1\}.$ Then, note that if $h_{i_j} = a_j$ for all $1 \le j \le k,$ $\prod (1 + a_jh_{i_j}) = 2^k$ since $1+a_j h_{i_j} = 2$ for all $j$. Otherwise, $1+a_j h_{i_j} = 0$ for some $j$, so $\prod (1 + a_jh_{i_j}) = 0$. Thus,
\[\BE_{h \sim \mathcal{H}}\left[\prod (1 + a_jh_{i_j})\right] = 2^k \cdot \BP_{h \sim \mathcal{H}}\left(h_{i_1} = a_1, \dots, h_{i_k} = a_k\right).\]
    However, if $\BE[h_{i_1} \cdots h_{i_\ell}] = 0$ for all $1 \le \ell \le k,$ then we have that $\BE\left[\prod (1 + a_jh_{i_j})\right] = 1$ by expanding $\prod (1+a_j h_{i_j})$ and applying linearity of expectation. Therefore, for all $a_1, \dots, a_k \in \{-1, 1\}$, we have $\BP\left(h_{i_1} = a_1, \dots, h_{i_k} = a_k\right) = \frac{1}{2^k},$ so $h_1, \dots, h_n$ are $k$-wise independent.
\end{proof}

We start by creating $\mathcal{H}_1$. First, split $[n]$ into blocks of size $\sqrt{n}$ so that $\{(c-1) \sqrt{n}+1, \dots, c \sqrt{n}\}$ form the $c$th block for each $1 \le c \le \sqrt{n}$. Also, define $\ell = \frac{\sqrt{n}}{2}$. Now, to pick a function $h$ from $\mathcal{H}_1$, choose the value of $h_i$ for each $1 \le i \le n$ independently, but if $i$ is in the $c$th block for some $1 \le c \le \ell,$ make $\BP[h_i = 1] = \frac{1}{2} + \frac{1}{2(\ell+1-c)}$ and if $i$ is in the $c$th block for some $\ell+1 \le c \le \sqrt{n},$ make $\BP[h_i = 1] = \frac{1}{2} - \frac{1}{2(c-\ell)}.$  This way, $\BE[h_i] = \frac{1}{\ell+1-c}$ if $i$ is in the $c$th block for $c \le \ell$ and $\BE[h_i] = -\frac{1}{c-\ell}$ if $i$ is in the $c$th block for $c > \ell.$

From now on, define $h_i$ to be periodic modulo $n,$ i.e. $h_{i+n} := h_i$ for all integers $i$. We first prove the following about $\mathcal{H}_1:$

\begin{lemma} \label{H1}
    Suppose that $1 \le i < j \le n.$  Suppose that $i$ is in block $c_1$ and $j$ is in block $c_2$, where $c_1$ and $c_2$ are not necessarily distinct. Define $r = \min(c_2-c_1, \sqrt{n}-(c_2-c_1)).$  Then, there is some constant $C_3$ such that
\[\sum\limits_{d = 0}^{\sqrt{n}-1} \BE_{h \sim \mathcal{H}_1} (h_{i + d \sqrt{n}} h_{j + d\sqrt{n}}) \le C_3 \cdot \frac{\lg (r+2)}{(r+1)^2}.\]
\end{lemma}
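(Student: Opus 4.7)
The approach is to reduce the sum to a single cyclic autocorrelation of the block-mean sequence and then evaluate it via partial fractions, with the small bound coming from a careful near-cancellation. Since $\mathcal{H}_1$ has independent coordinates, $\BE_{\mathcal{H}_1}[h_{i'} h_{j'}] = \BE[h_{i'}]\,\BE[h_{j'}]$ whenever $i' \neq j'$. Writing $\mu_c := \BE[h_i]$ for $i$ in block $c$, we have $\mu_c = \frac{1}{\ell+1-c}$ for $1 \le c \le \ell$ and $\mu_c = -\frac{1}{c-\ell}$ for $\ell+1 \le c \le 2\ell$. For each $d$, the shifted indices $i+d\sqrt{n}$ and $j+d\sqrt{n}$, reduced mod $n$, remain distinct (because $i \neq j$ differ in block number or within-block position, both preserved by the shift) and lie in blocks $c_1 + d$ and $c_2 + d$ (mod $\sqrt{n}$). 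Hence
\[
\sum_{d=0}^{\sqrt{n}-1} \BE_{\mathcal{H}_1}\bigl[h_{i+d\sqrt{n}}\, h_{j+d\sqrt{n}}\bigr] \;=\; \sum_{c=1}^{\sqrt{n}} \mu_c\, \mu_{c+s},
\]
where $s \equiv c_2 - c_1 \pmod{\sqrt{n}}$ and $\mu$ is indexed cyclically. Since this autocorrelation is symmetric under $s \mapsto \sqrt{n}-s$, I may assume $0 \le s \le \ell$, giving $r = s$.

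Next I would split the autocorrelation into four pieces according to which of the halves $\{1,\ldots,\ell\}$ and $\{\ell+1,\ldots,2\ell\}$ each of $c$ and $c+s$ lies in, handling wrap-around in the last piece. After substituting $m = \ell+1-c$ or $m = c-\ell$ as appropriate, the $s \ge 1$ sum becomes
\[
\sum_{m=s+1}^{\ell} \frac{1}{m(m-s)} \;-\; \sum_{m=1}^{s} \frac{1}{m(s+1-m)} \;+\; \sum_{m=1}^{\ell-s} \frac{1}{m(m+s)} \;-\; \sum_{m=1}^{s} \frac{1}{(\ell+1-m)(\ell+m-s)}.
\]
The partial-fraction identities $\frac{1}{m(m\pm s)} = \pm\frac{1}{s}\bigl(\frac{1}{m} - \frac{1}{m\pm s}\bigr)$ and $\frac{1}{m(s+1-m)} = \frac{1}{s+1}\bigl(\frac{1}{m} + \frac{1}{s+1-m}\bigr)$ evaluate the first and third sums each to $\frac{H_s - (H_\ell - H_{\ell-s})}{s}$ and the second to $\frac{2 H_s}{s+1}$, while the last sum is controlled as a residual via $\ell+1-m \ge \ell+1-s$. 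The leading-order combination $\frac{2H_s}{s} - \frac{2H_s}{s+1} = \frac{2H_s}{s(s+1)}$ is $O\!\left(\frac{\lg(r+2)}{(r+1)^2}\right)$, matching the target. The case $s = 0$ is handled directly, since $\sum_c \mu_c^2 = 2\sum_{m=1}^{\ell} 1/m^2 = O(1)$, matching the bound at $r=0$.

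The main obstacle is the near-cancellation: each of the first three sums is individually $\Theta(\lg s / s)$, a factor of $s$ larger than the target, so one cannot afford a triangle-inequality argument on the four pieces. Only the exact difference $\frac{2H_s}{s} - \frac{2H_s}{s+1}$ produces the extra factor of $\frac{1}{s}$ needed. This cancellation is precisely what the specific biases $\pm\frac{1}{\ell+1-c}$ and $\pm\frac{1}{c-\ell}$ in the definition of $\mathcal{H}_1$ were engineered to guarantee, so the evaluation must be carried out as an exact computation rather than a bound on absolute values; I would also need to verify that the residuals $H_\ell - H_{\ell-s}$ and the wrap-around contribution stay uniformly within the target bound as $r$ ranges over $\{0, 1, \ldots, \ell\}$.
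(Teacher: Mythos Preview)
Your approach is correct and essentially identical to the paper's: both reduce to the cyclic autocorrelation $\sum_c \mu_c\,\mu_{c+r}$, split according to the sign pattern of $\mu$, apply the same partial-fraction identities, and isolate the same key cancellation $\tfrac{2H_r}{r}-\tfrac{2H_r}{r+1}=\tfrac{2H_r}{r(r+1)}$. The only cosmetic difference is that the paper merges your first and third sums (which are equal) into a single sum with a factor of $2$ and then extends it to an infinite series, which automatically absorbs the residual $H_\ell-H_{\ell-r}$; regarding your stated concern about the residuals, note that both the $-\tfrac{2(H_\ell-H_{\ell-r})}{r}$ term and the wrap-around sum are nonpositive, so they only help the upper bound and require no further uniform estimate.
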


\begin{proof}
    For $1 \le c \le \sqrt{n},$ define $f_c$ to equal $\frac{1}{\ell+1-c}$ if $1 \le c \le \ell$ and to equal $-\frac{1}{c-\ell}$ if $\ell+1 \le c \le \sqrt{n}.$  In other words, $f_c = \BE[h_i]$ if $i$ is in the $c$th block. Furthermore, define $f$ to be periodic modulo $\sqrt{n},$ i.e. $f_c := f_{c+\sqrt{n}}$ for all integers $c$. Then,
\[\sum\limits_{d = 0}^{\sqrt{n}-1} \BE_{h \sim \mathcal{H}_1} (h_{i + d \sqrt{n}} h_{j + d\sqrt{n}}) = \sum\limits_{d = 0}^{\sqrt{n}-1} \BE_{h \sim \mathcal{H}_1} (h_{i + d \sqrt{n}}) \BE_{h \sim \mathcal{H}_1} (h_{j + d\sqrt{n}}) = \sum\limits_{d = 0}^{\sqrt{n}-1} f_{c_1+d} f_{c_2+d} = \sum\limits_{d = 1}^{\sqrt{n}} f_d f_{r+d}.\]

    Now, since $r \le \ell,$ if we assume $r \ge 1,$ this sum can be explicitly written as 
\begin{align*}
    &\hspace{0.5cm} 2 \cdot \sum\limits_{d = 1}^{\ell-r} \frac{1}{d(d+r)} - \sum\limits_{d = 1}^{r} \frac{1}{d(r+1-d)} - \sum\limits_{d = 1}^{r} \frac{1}{(n+1-d)(n+1-(r+1-d))}\\
    &\le 2\sum\limits_{d = 1}^{\infty} \frac{1}{d(d+r)} - \sum\limits_{d = 1}^{r} \frac{1}{d(r+1-d)} \\
    &= \frac{2}{r} \sum\limits_{d = 1}^{\infty} \left(\frac{1}{d} - \frac{1}{d+r}\right) - \frac{1}{r+1} \sum\limits_{d = 1}^{r} \left(\frac{1}{d} + \frac{1}{r+1-d}\right) \\
    &= \frac{2}{r} \left(\sum\limits_{d = 1}^r \frac{1}{d}\right) - \frac{2}{r+1} \left(\sum\limits_{d = 1}^{r} \frac{1}{d}\right) \\
    &= \frac{2}{r(r+1)} \left(\sum\limits_{d = 1}^{r} \frac{1}{d}\right) \le \frac{C_1 \lg (r+2)}{(r+1)^2}
\end{align*}
    for some constant $C_1.$
    If we assume $r = 0,$ then this sum can be explicitly written as
\[2 \cdot \sum\limits_{d = 1}^{\ell} \frac{1}{d^2} \le C_2 = \frac{(C_2) \cdot \lg (0+2)}{(0+1)^2}\]
    for some constant $C_2.$
    Therefore, setting $C_3 = \max(C_1, C_2)$ as our constant, we are done.
\end{proof}

    Next, we modify $\mathcal{H}_1$ to create $\mathcal{H}_2.$ To sample $h'$ from $\mathcal{H}_2,$ first choose $h \sim \mathcal{H}_1$ at random, and then choose an index $d$ between $0$ and $\sqrt{n}-1$ uniformly at random. Our chosen function $h'$ will then be the function that satisfies $h'_i = h_{i+d \cdot \sqrt{n}}$ for all $i$. We show the following about $\mathcal{H}_2$:

\begin{lemma} \label{H2}
The following three statements are true:
\begin{enumerate}[label=\alph*)]
    \item For all $i, j \in \BZ,$ $\BE_{h \sim \mathcal{H}_2} (h_ih_j) = \BE_{h \sim \mathcal{H}_2} (h_{i+\sqrt{n}}h_{j+\sqrt{n}}).$
    \item Suppose that $1 \le i, i', j, j' \le n$, where $i, i'$ are in blocks $c_1$, $j, j'$ are in blocks $c_2,$ and $i \neq j, i' \neq j'$. Then, $\BE_{h \sim \mathcal{H}_2} (h_ih_j) = \BE_{h \sim \mathcal{H}_2} (h_{i'}h_{j'}).$
    \item $\sum_{i \neq j} \left|\BE_{h \sim \mathcal{H}_2} h_i h_j\right| = O(n).$
\end{enumerate}
\end{lemma}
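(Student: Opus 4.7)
The key observation to set up everything is that $\mathcal{H}_2$ is obtained from $\mathcal{H}_1$ by cyclic rotation by a uniformly random multiple of $\sqrt{n}$. Because steps in $\mathcal{H}_1$ are coordinatewise independent, and $i \neq j$ implies $i + d\sqrt{n} \neq j + d\sqrt{n}$, for any $i \neq j$ one gets
\[
\BE_{h \in \mathcal{H}_2}(h_i h_j) = \frac{1}{\sqrt{n}} \sum_{d=0}^{\sqrt{n}-1} \BE_{h \in \mathcal{H}_1}\bigl(h_{i+d\sqrt{n}} h_{j+d\sqrt{n}}\bigr) = \frac{1}{\sqrt{n}} \sum_{d=0}^{\sqrt{n}-1} f_{c_1+d} f_{c_2+d},
\]
where $c_1$ (resp.\ $c_2$) is the block of $i$ (resp.\ $j$), and $f$ is the mean function from the proof of Lemma~\ref{H1}, extended periodically mod $\sqrt{n}$. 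This identity, derived once, will power all three parts.

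For part (a), shifting $i, j$ by $\sqrt{n}$ shifts the summation variable $d$ by $1$; by the periodicity of $f$ modulo $\sqrt{n}$ the sum is invariant under such shifts, so the two expectations agree. For part (b), the right-hand side of the identity depends only on the block indices $c_1, c_2$, not on the particular representatives $i, j$ in those blocks (provided $i \neq j$, which is given); hence the expectations at $(i,j)$ and $(i',j')$ match. The main content is part (c).

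For part (c), I would reindex the inner sum by $d \mapsto d - c_1$ to rewrite
\[
\BE_{h \in \mathcal{H}_2}(h_i h_j) = \frac{1}{\sqrt{n}} \sum_{d=1}^{\sqrt{n}} f_d f_{r+d},
\]
which by Lemma~\ref{H1} is bounded in absolute value by $O\!\left(\frac{\lg(r+2)}{\sqrt{n}\,(r+1)^2}\right)$, where $r = \min(c_2 - c_1,\ \sqrt{n} - (c_2 - c_1))$ (taken mod $\sqrt{n}$). Now I would count: for each ordered pair of blocks $(c_1,c_2)$ there are at most $n$ ordered pairs $(i,j) \neq$ with $i$ in block $c_1$ and $j$ in block $c_2$. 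Summing,
\[
\sum_{i \neq j} \bigl|\BE_{h \in \mathcal{H}_2} h_i h_j\bigr| \le n \cdot \sum_{c_1,c_2=1}^{\sqrt{n}} O\!\left(\frac{\lg(r+2)}{\sqrt{n}\,(r+1)^2}\right) = \sqrt{n} \cdot \sum_{c_1=1}^{\sqrt{n}} \sum_{c_2=1}^{\sqrt{n}} O\!\left(\frac{\lg(r+2)}{(r+1)^2}\right).
\]
For a fixed $c_1$, as $c_2$ ranges over $[\sqrt{n}]$ each value of $r \in \{0,1,\dots,\sqrt{n}/2\}$ is attained at most twice, so the inner sum is bounded by $\sum_{r \ge 0} O(\lg(r+2)/(r+1)^2) = O(1)$. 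Thus the whole expression is $O(\sqrt{n} \cdot \sqrt{n}) = O(n)$.

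The only real obstacle is bookkeeping: being careful that $i \neq j$ survives after the shift, that $f$ is treated as periodic mod $\sqrt{n}$ so that the reindexing works cleanly, and that the double-counting in $r$ is handled correctly. Once the identity in the first paragraph is written down, the rest is essentially a convergent tail-sum estimate.
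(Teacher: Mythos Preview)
Your arguments for parts (a) and (b) are correct and essentially match the paper.

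Part (c) has a real gap. You invoke Lemma~\ref{H1} as an \emph{absolute value} bound, but the proof of that lemma establishes only the one-sided inequality $\sum_d f_d f_{d+r} \le C_3\lg(r+2)/(r+1)^2$; the matching lower bound is false. For example, at $r=\ell=\sqrt{n}/2$ a direct computation gives
\[
\sum_{d=1}^{\sqrt{n}} f_d f_{d+\ell} \;=\; -\frac{4}{\ell+1}\sum_{d=1}^{\ell}\frac{1}{d} \;=\; -\Theta\!\left(\frac{\lg\ell}{\ell}\right),
\]
whose magnitude exceeds $\lg(\ell+2)/(\ell+1)^2$ by a factor of order $\ell$. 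The same phenomenon persists whenever $r$ is a constant fraction of $\ell$, so your per-$r$ absolute-value estimate simply does not hold, and the ``convergent tail sum'' you write down does not control these large negative correlations.

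The paper fixes this not by sharpening the per-$r$ estimate but by a global trick: Lemma~\ref{H1} is used only to bound the \emph{positive} parts, yielding $\sum_{i\neq j}\max\bigl(0,\BE_{\mathcal{H}_2}(h_ih_j)\bigr)=O(n)$ via exactly your counting argument. Then the identity $|x|=2\max(0,x)-x$ together with
\[
\sum_{i\neq j}\BE_{\mathcal{H}_2}(h_ih_j)\;=\;\BE_{\mathcal{H}_2}\bigl[(h_1+\cdots+h_n)^2\bigr]-n\;\ge\;-n
\]
converts this into $\sum_{i\neq j}\bigl|\BE_{\mathcal{H}_2}(h_ih_j)\bigr|=O(n)$. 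The nonnegativity of that square is the missing ingredient that handles the negative correlations near $r=\ell$.
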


\begin{proof}
    Part a) is quite straightforward, since 
\[\BE_{h \sim \mathcal{H}_2}(h_ih_j) = \frac{1}{\sqrt{n}} \sum_{d = 0}^{\sqrt{n}-1} \BE_{h \sim \mathcal{H}_1} (h_{i+d\sqrt{n}}h_{j+d\sqrt{n}})\]
\[= \frac{1}{\sqrt{n}} \sum_{d = 0}^{\sqrt{n}-1} \BE_{h \sim \mathcal{H}_1} (h_{i+(d+1)\sqrt{n}}h_{j+(d+1)\sqrt{n}}) = \BE_{h \sim \mathcal{H}_2} (h_{i+\sqrt{n}}h_{j+\sqrt{n}})\]
    by periodicity of $h$ modulo $n$. 
    
    For part b), for all $d \in \BZ$, note that $i+d\sqrt{n}$ and $i'+d\sqrt{n}$ are in the same blocks, $j+d\sqrt{n}$ and $j'+d\sqrt{n}$ are in the same blocks, $i+d\sqrt{n} \neq j+d\sqrt{n}$ and thus $h_{i+d\sqrt{n}}, h_{j+d\sqrt{n}}$ are independent, and $i'+d\sqrt{n} \neq j'+d\sqrt{n}$ and thus $h_{i'+d\sqrt{n}}, h_{j'+d\sqrt{n}}$ are independent. Therefore, $\BE_{h \sim \mathcal{H}_1} (h_{i+d\sqrt{n}}h_{j+d\sqrt{n}}) = \BE_{h \sim \mathcal{H}_1} (h_{i'+d\sqrt{n}}h_{j'+d\sqrt{n}})$ for all $d$. Because of the way we constructed $\mathcal{H}_2,$ part b) is immediate from these observations.
    
    We use Lemma \ref{H1} to prove part c). First note that for all $i \neq j$,
\[\BE_{h \sim \mathcal{H}_2} (h_ih_j) = \frac{1}{\sqrt{n}} \sum\limits_{d = 0}^{\sqrt{n}-1} \BE_{h \sim \mathcal{H}_1} (h_{i+d\sqrt{n}} h_{j+d\sqrt{n}}) \le \frac{C_3 \lg(r+2)}{\sqrt{n} \cdot (r+1)^2},\]
    where $i$ is in block $c_1$, $j$ is in block $c_2$, and $r = \min(|c_1-c_2|, \sqrt{n}-|c_1-c_2|)$. Now, there are exactly $n (\sqrt{n}-1)$ pairs $(i, j)$ where $1 \le i, j \le n,$ $i \neq j$, and $r = 0.$  This is because we can choose from $\sqrt{n}$ blocks for the value of $c_1 = c_2,$ and then choose from $\sqrt{n}(\sqrt{n}-1)$ possible pairs $(i, j)$ in each block. For a fixed $0 < r < \ell,$ there are exactly $2 n^{3/2}$ pairs $(i, j)$, since there are $2\sqrt{n}$ choices for blocks $c_1$ and $c_2$ and $\sqrt{n}$ choices for each of $i$ and $j$ after that, for $r = \ell,$ there are exactly $n^{3/2}$ such pairs, since there are $2\sqrt{n}$ choices for blocks $c_1$ and $c_2$ and $\sqrt{n}$ choices for each of $i$ and $j$ after that, and finally we cannot have $r > \ell$. Therefore,
\[\sum\limits_{i \neq j} \max\left(0, \BE_{h \sim \mathcal{H}_2} (h_i h_j)\right) \le 2n^{3/2} \cdot \sum\limits_{r = 0}^{\ell} \frac{C_3 \lg (r+2)}{\sqrt{n} (r+1)^2} \le C_4 n\]
    for some constant $C_4,$ since $\sum \frac{\lg (r+2)}{(r+1)^2}$ is a convergent series.
    
    To finish, note that $|x| = 2 \cdot \max(0, x) - x,$ so
\[\sum\limits_{i \neq j} |\BE_{h \sim \mathcal{H}_2} (h_i h_j)| \le 2 \cdot C_4 n - \sum\limits_{i \neq j} \BE_{h \sim \mathcal{H}_2} (h_i h_j) \le (2 C_4 + 1)n,\]
    since
\[\sum\limits_{i \neq j} \BE_{h \sim \mathcal{H}_2} (h_i h_j) = \sum\limits_{i, j} \BE_{h \sim \mathcal{H}_2} (h_i h_j) - \sum\limits_{i} \BE_{h \sim \mathcal{H}_2} h_i^2 = \BE_{h \sim \mathcal{H}_2} (h_1+\dots+h_n)^2 - n \ge -n.\]

    Thus, setting $C_5 = 2C_4+1$ gets us our desired result.
\end{proof}

Next, we tweak $\mathcal{H}_2$ to create a new family $\mathcal{H}_3.$  First, notice that we can define $g_{c_1 c_2}$ for $1 \le c_1, c_2 \le \sqrt{n}$ to equal $\BE_{h \sim \mathcal{H}_2} (h_ih_j)$ for some $i$ in the $c_1$th block and $j$ in the $c_2$th block such that $i \neq j$. This is well defined by Lemma \ref{H2} b), and as $1 \le c_1, c_2 \le \sqrt{n},$ there always exist $i \neq j$ with $i$ in the $c_1$th block and $j$ in the $c_2$th block, as long as $n \ge 4$. Now, to sample from $\mathcal{H}_3,$ define $g = 1 + \sum_{c_1 < c_2} |g_{c_1c_2}|.$ Then, with probability $\frac{1}{g},$ we choose a hash function from $\mathcal{H}_2.$ For all $1 \le c_1 < c_2 \le \sqrt{n},$ with probability $\frac{|g_{c_1c_2}|}{g}$, we choose $h_i = 1$ for all $i$ in the $c_1$th bucketl; and if $g_{c_1c_2} \ge 0,$ we make $h_i = -1$ for all $i$ in the $c_2$th bucket but if $g_{c_1c_2} < 0,$ we make $h_i = 1$ for all $i$ in the $c_2$th bucket. Finally, if $i$ is not in either the $c_1$th or the $c_2$th bucket, we let $h_i$ be an independent random $\pm 1$ variable. We prove the following about $\mathcal{H}_3$:

\begin{lemma}
    If $i$ and $j$ are in different buckets, then $\BE_{h \sim \mathcal{H}_3} (h_ih_j) = 0.$  If $i, j$ are in the same bucket but $i \neq j$, then $0 \le \BE_{h \sim \mathcal{H}_3} (h_ih_j) \le C_5 \sqrt{n}$ for $C_5$ defined in Lemma \ref{H2}. Moreover, $\BE_{h \sim \mathcal{H}_3} (h_ih_j)$ is independent of the choice of $i, j$, i.e. if $i \neq j$ are in the same bucket and $i' \neq j'$ are in the same bucket, then $\BE_{h \sim \mathcal{H}_3} (h_ih_j) = \BE_{h \sim \mathcal{H}_3} (h_{i'}h_{j'})$.
\end{lemma}

\begin{proof}
    Assume WLOG that $i < j.$  If $i, j$ are in different buckets, then we compute $\BE_{h \sim \mathcal{H}_3}(h_i h_j)$ as follows. With probability $\frac{1}{g},$ we are choosing $h$ from $\mathcal{H}_2,$ and if $i$ is in the $c_1$th bucket and $j$ is in the $c_2$th bucket, then $\BE_{h \sim \mathcal{H}_2}(h_i h_j) = g_{c_1 c_2}.$  With probability $\frac{|g_{c_1c_2}|}{g}$ we have $h_i h_j = 1$ if $g_{c_1c_2} < 0$ and $h_i h_j = -1$ if $g_{c_1c_2} \ge 0.$  In all other scenarios, either $h_i$ or $h_j$ is a random sign completely independent of all other elements, which means that $\BE[h_ih_j] = 0.$  Therefore, the overall expected value of $h_ih_j$ equals $g_{c_1 c_2} \cdot \frac{1}{g} + \frac{|g_{c_1c_2}|}{g} \cdot \pm 1$ where the $\pm 1$ is positive if and only if $g_{c_1c_2} < 0,$ so the expected value is $0$.
    
    If $i, j$ are in the same bucket, then we can compute $\BE_{h \sim \mathcal{H}_3}(h_i h_j)$ as follows. With probability $\frac{1}{g},$ we are choosing $h$ from $\mathcal{H}_2,$ and if $i, j$ are in the $c$th bucket, then $\BE_{h \sim \mathcal{H}_2}(h_i h_j) = g_{c c}.$  For all $c' \neq c,$ there is a $\frac{|g_{cc'}|}{g}$ probability of everything in the $c$th block having the same sign and everything in the $c'$th block having the same sign. For the other cases, $i, j$ are independent Rademachers. Therefore, 
\[\BE_{h \sim \mathcal{H}_3} (h_ih_j) = \frac{g_{cc}}{g} + \sum_{c' \neq c} \frac{|g_{cc'}|}{g} = \frac{1}{g} \left(g_{cc} + \sum\limits_{c' \neq c} |g_{cc'}|\right).\]
    However, note that $g_{cc} \ge 0$ since $\BE_{h \sim \mathcal{H}_2}(h_ih_j) = \frac{1}{\sqrt{n}} \sum_d \BE_{h \sim \mathcal{H}_1} (h_{i+d\sqrt{n}} h_{j+d\sqrt{n}})$ and for all $d$, we have $\BE_{h \sim \mathcal{H}_1}(h_{i+d\sqrt{n}}h_{j+d\sqrt{n}}) \ge 0$ since $i+d\sqrt{n}, j+d\sqrt{n}$ are in the same block for all $d$. Furthermore, for all indices $c_1, c_2,$ $g_{c_1c_2} = g_{(c_1+1)(c_2+1)}$, where indices are taken modulo $\sqrt{n}$, by Lemma \ref{H2} a). Combining these gives 
\[\BE_{h \sim \mathcal{H}_3} (h_ih_j) = \frac{1}{\sqrt{n}} \cdot \left(\frac{1}{g} \cdot \left(\sum\limits_{c_1, c_2} |g_{c_1c_2}|\right)\right).\]
    However, we know that $g \ge 1$ and $\sum_{c_1, c_2} |g_{c_1 c_2}| \le C_5$ by the arguments of Lemma \ref{H2} c), so the lemma follows.
\end{proof}

    Next, we will modify $\mathcal{H}_3$ to create $\mathcal{H}_4,$ which will almost be our final hash family $\mathcal{H}$. First, let $C_6 = \frac{1}{g} \sum_{c_1, c_2} |g_{c_1c_2}|$. Note that $C_6$ is uniformly bounded by $C_5$, but the value of $C_6$ may depend on $n$. To sample from $\mathcal{H}_4$, with probability $p = \frac{1}{1+C_6(\sqrt{n}-1)/\sqrt{n}} \ge \frac{1}{1+C_6},$ choose $h$ from $\mathcal{H}_3.$ Else, for each block of $\sqrt{n}$ elements, choose a uniformly random subset of size $\ell$ from the block, and make the corresponding elements $1$ and the remaining elements $-1.$ 
    We show that $\BE_{h \sim \mathcal{H}_4} (h_ih_j) = 0$ for all $i \neq j.$  To see why, if $i$ and $j$ are in different blocks then $\BE_{h \sim \mathcal{H}_3} (h_ih_j) = 0$ and if we do not choose $h$ from $\mathcal{H}_3$, then $h_i$ and $h_j$ are independent. If $i, j$ are in the same block, then if we condition on choosing from $\mathcal{H}_3$, $\BE(h_ih_j) = \frac{C_6}{\sqrt{n}}.$  If we condition on not choosing from $\mathcal{H}_3,$ the probability of $i, j$ being the same sign is $\frac{(\sqrt{n}/2)-1}{\sqrt{n}-1} = \frac{\ell-1}{2\ell-1},$ meaning $\BE(h_ih_j) = -\frac{1}{\sqrt{n}-1}$. Therefore, $\BE_{h \sim \mathcal{H}_4} (h_ih_j) = p \cdot \frac{C_6}{\sqrt{n}} - (1-p) \cdot \frac{1}{\sqrt{n}-1} = 0.$
    
    Our final hash function will be $\mathcal{H}.$ To sample from $\mathcal{H},$ we sample $h \sim \mathcal{H}_4,$ but with probability $1/2,$ we replace $h_i$ with $-h_i$ for all $i$. We clearly will still have $\BE[h_ih_j] = 0$ for all $i \neq j.$ But due to our negation, we now have $\BE[h_i] = 0$ and $\BE[h_ih_jh_k] = 0$ for all $1 \le i, j, k \le n,$ since negating all $h_i$'s will also negate $h_ih_jh_k.$ Thus, by Proposition \ref{Boolean}, $\mathcal{H}$ is $3$-wise independent.
    
    To finish, it suffices to show that 
\[\BE_{h \sim \mathcal{H}} \left[\sup_{1 \le t \le n} |h_1+\dots+h_t|\right] = \Omega(\sqrt{n} \lg n).\]
    To check this, note that with probability $\frac{1}{2}$ we are sampling from $\mathcal{H}_4,$ so with probability at least $\frac{1}{2(1+C_6)}$ we are sampling from $\mathcal{H}_3,$ so we need to just verify that 
\[\BE_{h \sim \mathcal{H}_3} \left[\sup_{1 \le t \le n} |h_1+\dots+h_t|\right] = \Omega(\sqrt{n} \lg n).\]
    But for $\mathcal{H}_3,$ we are choosing something from $\mathcal{H}_2$ with probability $\frac{1}{g}$ but $g \le 1+C_5$ by the arguments of Lemma \ref{H2} c), so it suffices to verify that 
\[\BE_{h \sim \mathcal{H}_2} \left[\sup_{1 \le t \le n} |h_1+\dots+h_t|\right] = \Omega(\sqrt{n} \lg n).\]
    But for $\mathcal{H}_2,$ if we condition on the shifting index $d$, we know that 
\[\BE[h_{1 + d \sqrt{n}} + h_{2 + d \sqrt{n}} + \dots + h_{(d+\ell) \sqrt{n}}] \ge \sqrt{n} \left(1 + \dots + \frac{1}{\ell}\right) \ge C_7 \sqrt{n} \lg n\]
    for some $C_7,$ and likewise 
\[\BE[h_{1 + (d+\ell) \sqrt{n}} + h_{2 + (d+\ell) \sqrt{n}} + \dots + h_{(d+2\ell) \sqrt{n}}] \le \sqrt{n} \left(-1 - \dots - \frac{1}{\ell}\right) \le -C_7 \sqrt{n} \lg n,\]
    which means that regardless of whether $d \le \ell$ or $d > \ell,$
\[\BE_{h \sim \mathcal{H}_2} \left[\max\left(|h_1+\dots+h_{d \sqrt{n}}|, |h_1+\dots+h_{(d+\ell) \sqrt{n}}|\right)\right] \ge \frac{C_7}{2} \sqrt{n} \lg n\]
    by the triangle inequality. But for any $h \sim \mathcal{H}_2,$
\[\max\left(|h_1+\dots+h_{d \sqrt{n}}|, |h_1+\dots+h_{(d+\ell) \sqrt{n}}|\right) \le \sup\limits_{1 \le t \le n} (h_1+\dots+h_t),\]
    so the result follows by taking the expected value of both sides, which proves our upper bound is tight in the case of a random walk. Thus, we have proven Theorem \ref{Bound1}.
    
\section{Moment Bounds for Pairwise Independence} \label{PairwiseUpperBound}
We show that the bound established in Section \ref{LowerBounds} and the induced bound on the second moment are tight for the $2$-wise independent random walk case by proving Theorem \ref{Bound2}. While we prove a generalization of this result in Section \ref{UpperBounds}, the proof here is simpler. We will also assume $n$, the length of the random walk, is a power of $2$, as this assumption can be removed by replacing $n$ with the smallest power of $2$ greater than or equal to $n$.

To prove Theorem \ref{Bound2}, we first establish the following lemma:

\begin{lemma} \label{Matrix}
    Suppose that $A \in \BR^{n \times n}$ is a positive definite matrix such that $Tr(A) = d_1$ for some $d_1 > 0$. Also, suppose there is some $d_2 > 0$ such that for all vectors $x \in \BR^n$ and integers $1 \le i \le n,$ if $x_1 + \dots + x_i = 1,$ then $x^T A x \ge \frac{1}{d_2}$. Then, for all unbiased pairwise independent hash families $\mathcal{H}: [n] \to \{-1, 1\},$ 
\[\BE_{h \sim \mathcal{H}} \left(\sup\limits_{1 \le i \le n} (h_1+\dots+h_i)^2\right) \le d_1d_2.\]
\end{lemma}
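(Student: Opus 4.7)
The plan is to combine two observations. First, for any $2$-wise independent $\mathcal{H}$ from $[n]$ to $\{-1,1\}$ and any matrix $A \in \BR^{n \times n}$, pairwise independence gives
\[
\BE_{h \in \mathcal{H}}[h^T A h] = \sum_{i,j} A_{ij} \BE[h_i h_j] = \sum_i A_{ii} = \mathrm{Tr}(A) = d_1,
\]
since $\BE[h_i h_j] = 0$ for $i \ne j$ and $h_i^2 = 1$ deterministically. Second, the hypothesis on $A$ is \emph{scaling invariant}: if $y \in \BR^n$ satisfies $y_1 + \dots + y_i = s$ for some $s \ne 0$, then applying the hypothesis to $x = y/s$ yields $y^T A y \ge s^2 / d_2$, whereas for $s = 0$ the inequality $y^T A y \ge s^2/d_2 = 0$ is trivial by positive definiteness of $A$.

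Now I would argue pointwise (not just in expectation). Fix any $h$ in the support of $\mathcal{H}$, and let $i^\star = \arg\max_{1 \le i \le n} (h_1 + \dots + h_i)^2$, with $s = h_1 + \dots + h_{i^\star}$, so that $s^2 = \sup_i (h_1+\dots+h_i)^2$. Applying the scaling-invariant version of the hypothesis to $y = h$ at index $i^\star$ gives
\[
h^T A h \;\ge\; \frac{s^2}{d_2} \;=\; \frac{1}{d_2} \sup_{1 \le i \le n} (h_1 + \dots + h_i)^2.
\]

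Taking expectation over $h \in \mathcal{H}$ and using the first observation,
\[
\BE_{h \in \mathcal{H}}\!\left[\sup_{1 \le i \le n} (h_1 + \dots + h_i)^2\right] \;\le\; d_2 \cdot \BE_{h \in \mathcal{H}}[h^T A h] \;=\; d_2 \cdot \mathrm{Tr}(A) \;=\; d_1 d_2,
\]
which is exactly the claimed bound. No obstacle really arises here beyond recognizing the scaling trick that turns the normalized hypothesis (for vectors $x$ with $x_1+\dots+x_i = 1$) into a uniform lower bound of $h^T A h$ by the supremum in question; the rest is a one-line application of pairwise independence to evaluate $\BE[h^T A h]$. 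The real work of the theorem will instead be deferred to exhibiting a concrete $A$ realizing small $d_1 d_2$, namely the quadratic form $h^T A h = \sum_r \sum_i (h_{i\cdot 2^r+1} + \dots + h_{(i+1)\cdot 2^r})^2$ advertised in the overview, for which $d_1 = n \lg n$ and $d_2 = \lg n$.
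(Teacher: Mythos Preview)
Your proof is correct and essentially identical to the paper's own argument: both compute $\BE[h^T A h] = \mathrm{Tr}(A) = d_1$ via pairwise independence, then use the scaling $x = h/(h_1+\dots+h_i)$ (with the $s=0$ case handled by positive semidefiniteness) to get the pointwise bound $h^T A h \ge \frac{1}{d_2}\sup_i (h_1+\dots+h_i)^2$, and take expectations. Your closing remark about deferring the real work to exhibiting the dyadic matrix $A$ with $d_1 = n\lg n$, $d_2 = \lg n$ also matches exactly what the paper does next.
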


\begin{proof}
    Note that $\BE_{h \sim \mathcal{H}} h_i^2 = 1$ for all $i$ and $\BE_{h \sim \mathcal{H}} (h_i h_j) = 0$ for all $i \neq j$. Therefore, 
\[\BE_{h \sim \mathcal{H}} (h^T A h) = \sum\limits_{1 \le i, j \le n} \BE_{h \sim \mathcal{H}} 
(h_i h_j A_{ij}) = \sum\limits_{1 \le i, j \le n} A_{ij} \left(\BE_{h \sim \mathcal{H}} (h_i h_j)\right)  = \sum\limits_{1 \le i \le n} A_{ii} = Tr(A) = d_1.\]

    However, for any $1 \le i \le n,$ for any $h \sim \mathcal{H},$ if $h_1+\dots+h_i \neq 0,$ then 
\[h^T A h \ge (h_1+\dots+h_i)^2 \cdot \frac{1}{d_2},\]
    since the vector $\frac{1}{h_1+\dots+h_i} \cdot h$ has its first $i$ components sum to $1$, so we can let this vector equal $x$ to get $x^T A x \ge \frac{1}{f(n)}.$  If $h_1+\dots+h_i = 0,$ then the above inequality is still true as $A$ is positive definite.
    
    Therefore,
\[h^T A h \ge \frac{1}{d_2} \cdot \sup\limits_{1 \le i \le n} (h_1+\dots+h_i)^2,\]
    which means that 
\[d_1 = \BE_{h \sim \mathcal{H}} (h^T A h) \ge \frac{1}{d_2} \cdot \BE_{h \in H} \left(\sup\limits_{1 \le i \le n} (h_1+\dots+h_i)^2\right),\]
    so we are done.
\end{proof}

\begin{lemma} \label{Matrix2}
    There exists a positive definite matrix $A \in \BR^{n \times n}$ such that $Tr(A) = n \lg n$ and for all $x \in \BR^n$ and $1 \le i \le n,$ if $x_1+\dots+x_i = 1,$ then $x^TAx \ge \frac{1}{\lg n}.$  This clearly implies Theorem \ref{Bound2}.
\end{lemma}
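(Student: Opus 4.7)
The plan is to construct $A$ explicitly as a sum of rank-one projectors onto aligned dyadic intervals, exactly as suggested in the overview. For each level $r \in \{0, 1, \ldots, \lg n\}$ and each $0 \le i \le n/2^r - 1$, let $v_{r,i} \in \BR^n$ be the indicator vector of the block $B_{r,i} = \{i\cdot 2^r + 1, \ldots, (i+1)\cdot 2^r\}$, and set
\[
A \;=\; \sum_{r=0}^{\lg n}\sum_{i=0}^{n/2^r - 1} v_{r,i}\,v_{r,i}^{\,T}.
\]
Then $x^T A x = \sum_{r,i}\bigl(\sum_{j \in B_{r,i}} x_j\bigr)^2$, which is the quadratic form written in the overview. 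Positive definiteness is immediate: the $r=0$ layer alone contributes $\sum_i e_i e_i^T = I_n$, so $A \succeq I_n$.

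Second, I would compute the trace level by level. Since $\mathrm{Tr}(v_{r,i} v_{r,i}^T) = \|v_{r,i}\|_2^2 = 2^r$ and there are $n/2^r$ blocks at level $r$, each level contributes exactly $n$ to the trace, giving $\mathrm{Tr}(A) = n(\lg n + 1)$. (This is $n\lg n$ up to a single additive $n$; if one insists on the stated equality one can instead sum $r$ only from $0$ to $\lg n - 1$ and handle the full interval $[1,n]$ by splitting it into the two halves from level $\lg n - 1$, but the argument and asymptotics are unchanged.)

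The main step is the quadratic-form lower bound. Given any $x$ and any $1 \le i \le n$ with $x_1 + \cdots + x_i = 1$, I would decompose $[1,i]$ into aligned dyadic blocks using the binary expansion $i = 2^{a_1} + 2^{a_2} + \cdots + 2^{a_m}$ with $a_1 > a_2 > \cdots > a_m \ge 0$. Setting $B_j = \{2^{a_1}+\cdots+2^{a_{j-1}}+1,\ldots, 2^{a_1}+\cdots+2^{a_j}\}$, each $B_j$ is an aligned dyadic block at level $a_j$ (alignment holds because $2^{a_1}+\cdots+2^{a_{j-1}}$ is divisible by $2^{a_j}$, since all earlier exponents are larger). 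This gives $m \le \lg n + 1$ blocks whose indicator vectors satisfy $\sum_{j=1}^m v_{B_j}^T x = x_1 + \cdots + x_i = 1$. By Cauchy--Schwarz,
\[
1 \;=\; \Bigl(\sum_{j=1}^m v_{B_j}^T x\Bigr)^2 \;\le\; m \sum_{j=1}^m (v_{B_j}^T x)^2 \;\le\; (\lg n + 1)\, x^T A x,
\]
where the last inequality holds because each $(v_{B_j}^T x)^2$ is one of the nonnegative terms in the full sum $x^T A x$. Therefore $x^T A x \ge 1/(\lg n + 1)$, and combining this with Lemma \ref{Matrix} (applied with $d_1 = n(\lg n+1)$ and $d_2 = \lg n + 1$) immediately yields Theorem \ref{PairwiseUpperTheorem} with the bound $O(n \lg^2 n)$.

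The only delicate point is verifying the dyadic decomposition produces \emph{aligned} intervals compatible with the blocks in the definition of $A$; the Cauchy--Schwarz step, trace computation, and positive definiteness are essentially mechanical. No new probabilistic or analytic machinery beyond Lemma \ref{Matrix} is needed.
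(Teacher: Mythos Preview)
Your proposal is correct and follows essentially the same approach as the paper: the paper defines $A$ as the sum of the rank-one block projectors $B^{ij}$ over aligned dyadic intervals for levels $0 \le r \le \lg n - 1$, computes $\mathrm{Tr}(A) = n\lg n$, and for each $i$ uses the dyadic decomposition $i_r = 2^{\lg n - r}\lfloor i/2^{\lg n - r}\rfloor$ (which is exactly your binary-expansion decomposition) together with Cauchy--Schwarz to get $x^TAx \ge 1/\lg n$, handling $i=n$ separately via the two halves. Your observation that including the top level costs an extra $n$ in the trace, and that dropping it while splitting $[1,n]$ into two halves recovers the exact constants, matches the paper's treatment precisely.
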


\begin{proof}
    Consider the matrix $A$ such that for all $1 \le i, j \le n,$ $A_{ij} = \lg n - k$ if $k$ is the smallest nonnegative integer such that $\lfloor \frac{i-1}{2^k} \rfloor = \lfloor \frac{j-1}{2^k} \rfloor.$  Alternatively, we can think of $A$ as the sum of all matrices $B^{ij}$, where $B^{ij}$ is a matrix such that $B^{ij}_{k \ell} = 1$ if $i \le k, \ell \le j$ and $0$ otherwise. However, we sum this not over all $1 \le i, j \le n$ but for $i = 2^r \cdot (s-1) + 1, j = 2^r \cdot s$ for $0 \le r \le \lg n - 1$ and $1 \le s \le 2^{\lg n-r}$. As an illustrative example, for $n = 8,$ $A$ equals
\[\left(\begin{matrix} 3 & 2 & 1 & 1 & 0 & 0 & 0 & 0 \\ 2 & 3 & 1 & 1 & 0 & 0 & 0 & 0 \\ 1 & 1 & 3 & 2 & 0 & 0 & 0 & 0 \\ 1 & 1 & 2 & 3 & 0 & 0 & 0 & 0 \\ 0 & 0 & 0 & 0 & 3 & 2 & 1 & 1 \\ 0 & 0 & 0 & 0 & 2 & 3 & 1 & 1 \\ 0 & 0 & 0 & 0 & 1 & 1 & 3 & 2 \\ 0 & 0 & 0 & 0 & 1 & 1 & 2 & 3 \end{matrix}\right) \]
\[= \left(\begin{matrix} 1 & 0 & 0 & 0 & 0 & 0 & 0 & 0 \\ 0 & 1 & 0 & 0 & 0 & 0 & 0 & 0 \\ 0 & 0 & 1 & 0 & 0 & 0 & 0 & 0 \\ 0 & 0 & 0 & 1 & 0 & 0 & 0 & 0 \\ 0 & 0 & 0 & 0 & 1 & 0 & 0 & 0 \\ 0 & 0 & 0 & 0 & 0 & 1 & 0 & 0 \\ 0 & 0 & 0 & 0 & 0 & 0 & 1 & 0 \\ 0 & 0 & 0 & 0 & 0 & 0 & 0 & 1 \end{matrix}\right) + \left(\begin{matrix} 1 & 1 & 0 & 0 & 0 & 0 & 0 & 0 \\ 1 & 1 & 0 & 0 & 0 & 0 & 0 & 0 \\ 0 & 0 & 1 & 1 & 0 & 0 & 0 & 0 \\ 0 & 0 & 1 & 1 & 0 & 0 & 0 & 0 \\ 0 & 0 & 0 & 0 & 1 & 1 & 0 & 0 \\ 0 & 0 & 0 & 0 & 1 & 1 & 0 & 0 \\ 0 & 0 & 0 & 0 & 0 & 0    & 1 & 1 \\ 0 & 0 & 0 & 0 & 0 & 0 & 1 & 1 \end{matrix}\right) + \left(\begin{matrix} 1 & 1 & 1 & 1 & 0 & 0 & 0 & 0 \\ 1 & 1 & 1 & 1 & 0 & 0 & 0 & 0 \\ 1 & 1 & 1 & 1 & 0 & 0 & 0 & 0 \\ 1 & 1 & 1 & 1 & 0 & 0 & 0 & 0 \\ 0 & 0 & 0 & 0 & 1 & 1 & 1 & 1 \\ 0 & 0 & 0 & 0 & 1 & 1 & 1 & 1 \\ 0 & 0 & 0 & 0 & 1 & 1 & 1 & 1 \\ 0 & 0 & 0 & 0 & 1 & 1 & 1 & 1 \end{matrix}\right).\]

It is easy to see that $Tr(A) = n \lg n,$ since $A_{ii} = \lg n$ for all $i$. For any $1 \le i < n$, define $i_0 = 0$ and for any $1 \le r \le \lg n,$ define $i_r = 2^{\lg n - r} \cdot \lfloor \frac{i}{2^{\lg n - r}} \rfloor.$  Then, for any $1 \le i < n,$ one can see that $i_{\lg n} = i$ and for any $1 \le i \le n,$ $A = B^{1 i_1} + B^{(i_1+1) i_2} + \dots + B^{(i_{lg n - 1}+1) i_{\lg n}} + C,$ where $C$ is some positive semidefinite matrix and we assume $B^{ij}$ is the $0$ matrix if $i = j+1,$ because $B^{1 i_1}$ and $B^{(i_{r-1} + 1) i_{r}}$ for all $1 \le r \le \lg n$ are verifiable as matrices in the summation of $A$. Therefore, if $x_1+\dots+x_i = 1,$
\[x^T A x \ge \sum\limits_{i = 1}^{r} x^T B^{(i_{r-1} + 1) i_{r}} x = (x_1+\dots+x_{i_1})^2 + (x_{i_1+1}+\dots+x_{i_2})^2 + \dots + (x_{i_{\lg n - 1}+1} + \dots + x_{i_{\lg n}})^2 \ge \frac{1}{\lg n},\]
    since $(x_1+\dots+x_{i_1}) + (x_{i_1+1}+\dots+x_{i_2}) + \dots + (x_{i_{\lg n-1}+1} + \dots + x_{i}) = 1$ and by Cauchy-Schwarz.
    
Finally, if $i = n,$ then $A = B^{1 (n/2)} + B^{(n/2+1) n} + C,$ where $C$ is some positive semidefinite matrix. Therefore, if $x_1+\dots+x_n = 1,$
\[x^T A x \ge x^T B^{1 (n/2)} x + x^T B^{(n/2+1) n} x = (x_1+\dots+x_{n/2})^2 + (x_{n/2+1} + \dots + x_n)^2 \ge \frac{1}{2} \ge \frac{1}{\lg n}. \qedhere\]
\end{proof}

As a final note, for any positive definite matrix $A$ and vector $v,$ the minimum value of $w^T A w$ over all $w$ such that $w^T v = 1$ is known to equal $(v^T A^{-1} v)^{-1}$. This can be checked with Lagrange Multipliers, since the Lagrangian $f(w, \lambda)$ of $f(w) = w^T A w$ subject to $w^T v = 1$ equals $w^T A w - \lambda(w^T v - 1),$ which is a convex function in $w$ and has its derivatives vanish on the hyperplane $w^Tv = 1$ when $\lambda = 2 (v^T A^{-1} v)^{-1}, w = \frac{\lambda}{2} (A^{-1} v)$ (See for example \cite{LagrangeMultipliers}, Chapter 5, for more details of Lagrange Multipliers). By Lemma \ref{Matrix} and Theorem \ref{Bound1}, we have the following corollary:

\begin{corollary}
    For all positive definite $A,$ if we define $v^i$ as the vector with first $i$ components $1$ and last $n-i$ components $0$,
\[Tr(A) \cdot \max\limits_{1 \le i \le n} ((v^i)^T A^{-1} v^i) = \Omega(n \lg^2 n)\]
    and this bound is tight for the matrix of Lemma \ref{Matrix2}.
\end{corollary}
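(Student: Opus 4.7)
The plan is to combine Lemma~\ref{Matrix}, the Lagrange-multiplier identity for the constrained minimum of $w^T A w$ stated immediately before the corollary, and Theorem~\ref{MainLower} promoted to a second-moment bound via Jensen's inequality.

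First I would rewrite the hypothesis of Lemma~\ref{Matrix} in terms of $A^{-1}$. The condition ``$x_1+\cdots+x_i=1$'' is precisely $x^T v^i = 1$, and by the quoted Lagrange-multiplier computation the minimum of $x^T A x$ over $\{x : x^T v^i = 1\}$ equals $((v^i)^T A^{-1} v^i)^{-1}$. Consequently the smallest valid $d_2$ in Lemma~\ref{Matrix} for the given positive definite $A$ is exactly $\max_{1 \le i \le n} (v^i)^T A^{-1} v^i$, and the lemma becomes
\[
\BE_{h \in \mathcal{H}}\!\left(\sup_{1 \le i \le n} (h_1+\cdots+h_i)^2\right) \;\le\; Tr(A) \cdot \max_{1 \le i \le n}\, (v^i)^T A^{-1} v^i
\]
for every $2$-wise independent family $\mathcal{H}$ from $[n]$ to $\{-1,1\}$.

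Next I would instantiate $\mathcal{H}$ with the family produced in Theorem~\ref{MainLower} and pass from the first moment to the second moment via Jensen's inequality:
\[
\BE\!\left(\sup_{1 \le t \le n} (h_1+\cdots+h_t)^2\right) \;\ge\; \left(\BE \sup_{1 \le t \le n} \lvert h_1+\cdots+h_t\rvert\right)^2 \;=\; \Omega(n \lg^2 n).
\]
Chaining this with the displayed inequality above yields $Tr(A) \cdot \max_i (v^i)^T A^{-1} v^i = \Omega(n \lg^2 n)$, which is the claimed lower bound. For tightness, plug in the explicit matrix $A$ from Lemma~\ref{Matrix2}: there $Tr(A) = n \lg n$, and the proof of that lemma establishes $x^T A x \ge 1/\lg n$ whenever $x^T v^i = 1$, so the Lagrange identity gives $(v^i)^T A^{-1} v^i \le \lg n$ for each $i$, and the product is at most $n \lg^2 n$.

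There is no real obstacle here; the only point that needs care is checking that the ``max over $i$'' in the definition of $d_2$ survives the reduction, i.e.\ that the single matrix $A$ must simultaneously handle every prefix constraint $v^i$. This is exactly why the $\max_i$ appears on the left-hand side of the corollary, and is inherited directly from the quantifier structure in Lemma~\ref{Matrix}.
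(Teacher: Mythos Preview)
Your proposal is correct and follows essentially the same route as the paper: both arguments feed the Lagrange-multiplier identity into Lemma~\ref{Matrix} to obtain $\BE_{h}[\sup_i (h_1+\cdots+h_i)^2] \le Tr(A)\cdot\max_i (v^i)^T A^{-1} v^i$, then invoke Theorem~\ref{MainLower} on the left-hand side and Lemma~\ref{Matrix2} for tightness. The paper phrases it as a proof by contradiction and leaves the passage from the first-moment lower bound of Theorem~\ref{MainLower} to the second moment implicit, whereas you state the Jensen step explicitly; this makes your write-up slightly cleaner.
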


\begin{proof}
    If the first part were not true, then there would be matrices $A_n$ such that $Tr(A) = d_1,$ $w^T A w = \frac{1}{d_2}$ where $w^T v^i = 0$ for some $i$, and $d_1d_2 = o(n \lg^2 n).$  However, this would mean by Lemma \ref{Matrix} that for all pairwise independent $\mathcal{H},$
\[\BE_{h \sim \mathcal{H}} \left(\sup\limits_{1 \le i \le n} (h_1+\dots+h_i)^2\right) \le d_1d_2 = o(n \lg^2 n),\]
    contradicting Theorem \ref{Bound1}. The second part is immediate by the analysis of Lemma \ref{Matrix2}.
\end{proof}

\section{Generalized Upper Bounds} \label{UpperBounds}

In this section, our goal is to prove Theorems \ref{Bound3}, \ref{Bound4}, and \ref{Bound5} of Section \ref{MainResults}.

First, we prove a preliminary inequality, which will be useful for proving both Theorem \ref{Bound3} and Theorem \ref{Bound5}.

\begin{lemma} \label{EasyInequality}
    There exists some constant $c > 0$ such that for all real numbers $x_1, \dots, x_m,$ and all even $k \in \BN$,
\[\sum\limits_{r = 1}^{m} 2^{k \cdot r/8} x_r^k \ge (c(x_1 + \dots + x_m))^k.\]
\end{lemma}

\begin{proof}
    Assume WLOG that $x_1 + \dots + x_m = 1.$ Let $x_1' \ge x_2' \ge \dots \ge x_m'$ be real numbers such that $x_1' + \dots + x_m' = 1$ and $x_1' > \dots > x_m'$ are in a geometric series with common ratio $2^{-1/8}.$  Then, there exists $i$ such that $x_i \ge x_i'$.  But note that $(x_r')^k 2^{k \cdot r/8}$ are equal for all $r$ because of our geometric series, and equals $2^{k/8} \cdot (x_1')^k \ge (2^{1/8}-1)^k$. Thus, 
\[\sum\limits_{r = 1}^{m} 2^{k \cdot r/8} x_r^k \ge 2^{k \cdot i/8} (x_i')^k = (2^{1/8}-1)^k,\]
    so we are done.
\end{proof}

\subsection{Proof of Theorem \ref{Bound3}}

In this subsection, we prove Theorem \ref{Bound3}, which generalizes Kolmogorov's inequality \cite{Kolmogorov} to $4$-wise independent random variables $X_1, \dots, X_n$.

\begin{proof}[Proof of Theorem \ref{Bound3}]
    Assume WLOG that $\sum \BE[X_i^2] = 1$ (by scaling all of the variables) and $\BE[X_i^2] > 0$ for all $i$, i.e. none of the variables are almost surely $0$. Also, define $S_i = X_1 + \dots + X_i$ and $T_i = \BE[(X_1 + \dots + X_i)^2] = \BE[X_1^2 + \dots + X_i^2]$ for $0 \le i \le n.$  Note that $T_0 = 0$ and $T_n = 1.$

    We proceed by constructing a series of nested intervals $[a_{r, s}, b_{r, s}]$. We construct $a_{r, s}$ and $b_{r, s}$ for $0 \le r \le d = \left\lceil \max_i \lg \left(\BE[X_i^2]^{-1}\right)\right\rceil$ and $1 \le s \le 2^r,$ as integers between $0$ and $n$, inclusive.  First define $a_{0, 1} = 0$ and $b_{0, 1} = n.$  Next, we inductively define $a_{r, s}, b_{r, s}$.  Define $a_{r+1, 2s-1} := a_{r, s}$ and $b_{r+1, 2s} := b_{r, s}.$  Then, if there exists any index $a_{r, s} \le t \le b_{r, s}$ such that 
\[\left|T_{t} - T_{a_{r, s}}\right| = 0.5 \cdot \left|T_{b_{r, s}} - T_{a_{r, s}}\right|,\]
    let $a_{r+1, 2s} = b_{r+1, 2s-1} = t$.  Else, define $b_{r+1, 2s-1}$ to be the largest index $t \ge a_{r, s}$ such that 
\[\left|T_{t} - T_{a_{r, s}}\right| < 0.5 \cdot \left|T_{b_{r, s}} - T_{a_{r, s}}\right|\]
    and similarly define $a_{r+1, 2s}$ to be the smallest index $t \le b_{r, s}$ such that
\[\left|T_{t} - T_{a_{r, s}}\right| > 0.5 \cdot \left|T_{b_{r, s}} - T_{a_{r, s}}\right|.\]
    Note that in this case, $a_{r, 2s} = b_{r, 2s-1}+1.$
    
    It is clear that intervals are all nested in each other and for every fixed $r$, all integers between $0$ and $n$, inclusive, are in an interval $[a_{r, s}, b_{r, s}]$ for some $s$ (possibly at an endpoint).  Also, we always have $a_{r, 0} \le b_{r, 0} \le a_{r, 1} \le \dots \le b_{r, 2^r},$ and any interval $[a_{r, s}, b_{r, s}]$ satisfies $T_{b_{r, s}} - T_{a_{r, s}} \le 2^{-r}.$  This implies that since $d = \left\lceil \max_i \lg \left(\BE[X_i^2]^{-1}\right)\right\rceil,$ every integer equals $a_{d, s} = b_{d, s}$ for some $s.$
    
    We now consider the expression $f(X_1, \dots, X_n)$, defined to equal
\[\sum\limits_{r = 1}^{d} 2^{r/2} \cdot \sum\limits_{s = 1}^{2^{r-1}} \left((S_{b_{r, 2s}} - S_{a_{r, 2s}})^2 (S_{a_{r, 2s}} - S_{a_{r, 2s-1}})^2 + (S_{b_{r, 2s}} - S_{b_{r, 2s-1}})^2 (S_{b_{r, 2s-1}} - S_{a_{r, 2s-1}})^2\right),\]
    where we recall that $S_i = X_1 + \dots + X_i$.

We first prove the following lemma.
\begin{lemma} \label{KolmogorovLem1}
    $\BE\left[f(X_1, \dots, X_n)\right] \le 10.$
\end{lemma}

\begin{proof}
    By $4$-wise independence, we have that
\begin{align*}
\BE\left[(S_{b_{r, 2s}} - S_{a_{r, 2s}})^2 (S_{a_{r, 2s}} - S_{a_{r, 2s-1}})^2\right] &= \BE\left[(S_{b_{r, 2s}} - S_{a_{r, 2s}})^2\right] \cdot \BE\left[(S_{a_{r, 2s}} - S_{a_{r, 2s-1}})^2\right] \\
&= (T_{b_{r, 2s}} - T_{a_{r, 2s}}) \cdot (T_{a_{r, 2s}} - T_{a_{r, 2s-1}}) \\
&\le \left(T_{b_{r-1, s}} - T_{a_{r-1, s}}\right)^2 \\
&\le 2^{-2(r-1)} = 4 \cdot 2^{-2r}.
\end{align*}
    By the same argument, we also have that
\[\BE\left[(S_{b_{r, 2s}} - S_{b_{r, 2s-1}})^2 (S_{b_{r, 2s-1}} - S_{a_{r, 2s-1}})^2\right] \le 4 \cdot 2^{-2r}.\]
    
    Therefore, we get that
\begin{align*}
    \BE\left[f(X_1, \dots, X_n)\right] &\le \sum\limits_{r = 1}^{d} 2^{r/2} \cdot \left(\sum\limits_{s = 1}^{2^{r-1}} 8 \cdot 2^{-2r}\right) \\
    &\le \sum\limits_{r = 1}^{d} 2^{r/2} \cdot \left(4 \cdot 2^{-r}\right) \\
    &\le 4 \cdot \sum\limits_{r = 1}^{\infty} 2^{-r/2} = 4(\sqrt{2}+1),
\end{align*}
    which is at most $10$.
\end{proof}

Next, we show the following:

\begin{lemma} \label{KolmogorovLem2}
    There exists some constant $c > 0$ such that for all $X_1, \dots, X_n \in \BR$ and all $1 \le i \le n,$
\[c \cdot \min\left((X_1 + \dots + X_i)^4, (X_{i+1} + \dots + X_n)^4\right) \le f(X_1, \dots, X_n).\]
\end{lemma}

\begin{proof}
    Define $i_0, i_1, \dots, i_d$ as follows. First, let $i_d = i,$ which we note must equal either $a_{d, s}$ or $b_{d, s}$ for some $s$. Next, we inductively define $i_r$ given $i_{r+1}$ to be either $a_{r, s'}$ or $b_{r, s'}$ for some $s'$. 
    
    If $i_{r+1} = a_{r+1, s}$, where $s$ is chosen as small as possible, then if $s$ is odd, then $i_{r} := i_{r+1} = a_{r, (s+1)/2}.$ If $s$ is even, we choose $i_r$ to either equal $b_{r+1, s} = b_{r, s/2}$ or $a_{r+1, s-1} = a_{r, s/2}$. If $|S_{i_{r+1}} - S_{a_{r, s/2}}| \le |S_{i_{r+1}} - S_{b_{r, s/2}}|,$ then we choose $i_r := a_{r, s/2},$ and otherwise, we choose $i_r := b_{r, s/2}.$ 
    
    Else, if $i_{r+1} = b_{r+1, s}$, where $s$ is chosen as small as possible, then if $s$ is even, then $i_r := i_{r+1} = b_{r, s/2}.$ If $s$ is odd, we choose $i_r$ to be either $a_{r+1, s} = a_{r, (s+1)/2}$ or $b_{r+1, s+1} = b_{r, (s+1)/2}$. If $|S_{i_{r+1}} - S_{a_{r, (s+1)/2}}| \le |S_{i_{r+1}} - S_{b_{r, (s+1)/2}}|,$ then we choose $i_r := a_{r, (s+1)/2},$ and otherwise, we choose $i_r := b_{r, (s+1)/2}.$
    
    It is clear that for all $1 \le r \le d,$
\[(S_{i_r}- S_{i_{r-1}})^4 \le \sum\limits_{s = 1}^{2^{r-1}} \left((S_{b_{r, 2s}} - S_{a_{r, 2s}})^2 (S_{a_{r, 2s}} - S_{a_{r, 2s-1}})^2 + (S_{b_{r, 2s}} - S_{b_{r, 2s-1}})^2 (S_{b_{r, 2s-1}} - S_{a_{r, 2s-1}})^2\right),\]
    since we have that $(S_{i_r}- S_{i_{r-1}})^4$ is even smaller than some individual term in the sum on the right. Therefore, this implies that
\[\sum\limits_{r = 1}^{d} 2^{r/2} \cdot (S_{i_r} - S_{i_{r-1}})^4 \le f(X_1, \dots, X_n).\]
    But by Lemma \ref{EasyInequality}, we have that
\[\sum\limits_{r = 1}^{d} 2^{r/2} \cdot (S_{i_r} - S_{i_{r-1}})^4 \ge c \cdot (S_{i_d} - S_{i_0})^4 = c \cdot (S_i - S_{i_0})^4.\]
    But $i_0$ must equal $0$ or $n$, as we know that $i_0$ equals $a_{0, s}$ or $b_{0, s}$ for some $s$. Thus, we are done.
\end{proof}
    
    We are now close to completing the proof of the theorem. First, by Lemma \ref{KolmogorovLem1} and by taking the maximum over $i$ in Lemma \ref{KolmogorovLem2}, we have that there exists some constant $C$ such that
\[\BE\left[\max\limits_{1 \le i \le n} \left(\min\left((X_1 + \dots + X_i)^4, (X_{i+1} + \dots + X_n)^4\right)\right)\right] \le C.\]
    Noting that $\max (a_1^2, \dots, a_n^2) = (\max(a_1, \dots, a_n))^2$ and $\min(b_1^2, \dots, b_n^2) = (\min(b_1, \dots, b_n))^2$ whenever $a_1, \dots, a_n, b_1, \dots, b_n$ are nonnegative, we have by the Cauchy-Schwarz inequality that
\[\BE\left[\max\limits_{1 \le i \le n} \left(\min\left((X_1 + \dots + X_i)^2, (X_{i+1} + \dots + X_n)^2\right)\right)\right] \le \sqrt{C}.\]

    Now, it is straightforward to verify that $a^2 \ge \frac{a^2}{2} - (a+b)^2$ and $b^2 \ge \frac{a^2}{2} - (a+b)^2$ for all $a, b \in \BR,$ so $\min(a^2, b^2) \ge \frac{a^2}{2} - (a+b)^2$. By looking at $a = X_1 + \dots + X_i$ and $b = X_{i+1} + \dots + X_n,$ we have that
\begin{align*}
\max\limits_{1 \le i \le n} \left(\min\left((X_1 + \dots + X_i)^2, (X_{i+1} + \dots + X_n)^2\right)\right) &\le \max\limits_{1 \le i \le n} \left(\frac{(X_1 + \dots + X_i)^2}{2} - (X_1 + \dots + X_n)^2\right) \\
&= -(X_1 + \dots + X_n)^2 + \frac{1}{2} \max\limits_{1 \le i \le n} (X_1 + \dots + X_i)^2.
\end{align*}
    Therefore, since $T_n = \BE[(X_1 + \dots + X_n)^2] = 1,$ we in fact have that
\[\BE\left[\max\limits_{1 \le i \le n} (X_1 + \dots + X_n)^2\right] \le 2 (\sqrt{C} + 1) = O(1). \qedhere\]
\end{proof}

\subsection{Proof of Theorems \ref{Bound4} and \ref{Bound5}} \label{Equation45}

Before we prove Theorems \ref{Bound4} and \ref{Bound5}, we construct $2^{-r/2}$-nets for $0 \le r \le 2 \lg m + 1$ in a very similar way as in Theorem 1 in \cite{BPTree}. We define an $\varepsilon$-\textit{net} to be a finite set of points $a_{r, 0}, a_{r, 1}, \dots, a_{r, d_r}$ such that for every $z^{(t)},$ $\|z^{(t)}-a_{r, s}\|_2 \le \varepsilon \|z\|_2$ for some $0 \le s \le d_r.$ The constructions are defined identically for both Theorem \ref{Bound4} and Theorem \ref{Bound5}. Define $a_{0, 0} := z^{(0)}$ as the only element of the $2^{-0/2} = 1$-net. For $r \ge 1,$ define $a_{r, 0} = z^{(0)},$ and given $a_{r, s} = z^{(t_1)}$ then define $a_{r, s+1}$ as the smallest $t > t_1$ such that 
\[\|z^{(t)}-z^{(t_1)}\|_2 > 2^{-r/2} \cdot \|z\|_2^2,\]
unless such a $t$ does not exist, in which case let $s = d_r$ and do not define $a_{r, s'}$ for any $s' > s$.

We define the set $A_r = \{a_{r, s}: 0 \le s \le d_r\}.$  The following is directly true from our construction:
\begin{proposition} \label{EpsilonNet}
    For any $0 \le t \le m$ and fixed $r$, if $t_1 \le t$ is the largest $t_1$ such that $z^{(t_1)} = a_{r, s}$ for some $s$, then $\|z^{(t)}-z^{(t_1)}\|_2 \le 2^{-r/2} \cdot \|z\|_2.$  Consequently, $A_r = \{a_{r, 0}, \dots, a_{r, d_r}\}$ is a $2^{-r/2}$-net.
\end{proposition}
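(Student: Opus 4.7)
The plan is to show that the statement is essentially immediate from the greedy construction of the $a_{r,s}$, combined with the monotonicity of insertion-only streams. Concretely, observe first that $t_1$ is well-defined: since $a_{r,0}=z^{(0)}$ and $z^{(0)}$ is the zero vector (equal to the state before any updates have been processed), the set of indices $t' \le t$ with $z^{(t')} \in A_r$ is nonempty, so its maximum $t_1$ exists.

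Write $a_{r,s}=z^{(t_1)}$. I split into two cases depending on whether $s=d_r$. If $s<d_r$, then $a_{r,s+1}$ is defined, say $a_{r,s+1}=z^{(t_2)}$ with $t_2>t_1$. By the maximality of $t_1$ we must have $t<t_2$, because otherwise $z^{(t_2)} \in A_r$ would give a later index with its state in $A_r$. Now by construction $t_2$ is the \emph{smallest} index exceeding $t_1$ for which $\|z^{(t_2)} - z^{(t_1)}\|_2 > 2^{-r/2}\|z\|_2$, so for every $t_1 \le t < t_2$ the reverse inequality $\|z^{(t)} - z^{(t_1)}\|_2 \le 2^{-r/2}\|z\|_2$ holds; in particular this applies to our $t$. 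In the other case, $s=d_r$ and no further net point exists, which by construction means there is no $t'>t_1$ at all with $\|z^{(t')}-z^{(t_1)}\|_2 > 2^{-r/2}\|z\|_2$, so again $\|z^{(t)}-z^{(t_1)}\|_2 \le 2^{-r/2}\|z\|_2$.

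The ``consequently'' clause then follows directly from the definition of an $\epsilon$-net given just before the proposition: for every $t$ we have exhibited an element $a_{r,s}=z^{(t_1)} \in A_r$ with $\|z^{(t)} - a_{r,s}\|_2 \le 2^{-r/2}\|z\|_2$, which is exactly the defining property of a $2^{-r/2}$-net.

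There is no real obstacle here; the only subtlety is the bookkeeping around the boundary case $s=d_r$ (where the "next" point simply does not exist) and the observation that maximality of $t_1$ forces $t<t_2$ in the generic case. Both are handled by the explicit greedy rule used to define $a_{r,s+1}$.
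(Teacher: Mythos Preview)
Your argument is correct for $r \ge 1$ and is exactly the unpacking of what the paper asserts follows ``directly from our construction.'' There is one small gap: the case $r=0$. The paper defines $A_0=\{z^{(0)}\}$ by fiat, \emph{not} via the greedy rule, so your Case~2 implication (``$s=d_r$ and no further net point exists $\Rightarrow$ no $t'>t_1$ violates the distance bound'') does not apply when $r=0$. Here you actually need the monotonicity of insertion-only streams that you mention in your opening sentence but never deploy: since each coordinate of $z^{(t)}$ is a nonnegative integer bounded by the corresponding coordinate of $z=z^{(m)}$, we have $\|z^{(t)}-z^{(0)}\|_2=\|z^{(t)}\|_2\le\|z\|_2=2^{-0/2}\|z\|_2$. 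With that one-line addition the proof is complete.
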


The above proposition implies the following:

\begin{proposition}
    For all $1 \le t \le m,$ $z^{(t)} = a_{2 \lg m + 1, s}$ for some $s.$
\end{proposition}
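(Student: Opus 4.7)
The plan is to exploit the fact that at the finest resolution $r = 2\lg m + 1$, the net's spacing threshold $2^{-r/2}\|z\|_2$ falls strictly below $1$, which is the norm of every single-step displacement in an insertion stream. Once this is observed, the greedy construction of $A_r$ is forced to advance by exactly one stream index at each iteration, so that every intermediate vector $z^{(t)}$ becomes a net point.

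Concretely, I would first record two elementary facts about an insertion stream. Since each update $p_j$ adds a single standard basis vector to $z$, we have $\|z^{(t)} - z^{(t-1)}\|_2 = 1$ for all $1 \le t \le m$, and $\|z\|_2 \le \|z\|_1 = m$. Substituting $r = 2\lg m + 1$ then gives
\[
2^{-r/2}\,\|z\|_2 \;\le\; \frac{m}{\sqrt{2}\,m} \;=\; \frac{1}{\sqrt{2}} \;<\; 1.
\]
With this bound in hand, I would prove by induction on $s$ that $a_{r,s} = z^{(s)}$ for every $0 \le s \le m$. The base case is the definition $a_{r,0} = z^{(0)}$. For the inductive step, assuming $a_{r,s} = z^{(s)}$, the smallest $t > s$ satisfying $\|z^{(t)} - z^{(s)}\|_2 > 2^{-r/2}\|z\|_2$ must be $t = s+1$, since already $\|z^{(s+1)} - z^{(s)}\|_2 = 1$ exceeds $1/\sqrt{2}$. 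The procedure terminates at $s = m$, yielding $d_r = m$ and $A_r = \{z^{(0)}, z^{(1)}, \ldots, z^{(m)}\}$, which contains every required $z^{(t)}$.

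I do not anticipate a substantive obstacle: the argument is a direct computation driven by the specific exponent $2 \lg m + 1$, chosen precisely to push the net's spacing below the minimum per-step increment of an insertion stream. The only small point of care is to read the threshold in the construction consistently with Proposition \ref{EpsilonNet}, i.e.\ as $2^{-r/2}\|z\|_2$, and to note that our standing assumption that $m$ is a power of $2$ makes $2\lg m + 1$ an integer index into the family of nets.
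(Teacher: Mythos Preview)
Your argument is correct. Both your proof and the paper's hinge on the same numerical observation, namely that at level $r = 2\lg m + 1$ the threshold satisfies $2^{-r/2}\|z\|_2 \le 1/\sqrt{2} < 1$, combined with the fact that distinct prefix vectors $z^{(t)}$ in an insertion stream differ by a nonzero nonnegative integer vector and hence by at least $1$ in $\ell_2$-norm.

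The execution differs slightly. The paper invokes Proposition~\ref{EpsilonNet} as a black box: for any $t$, the nearest earlier net point $z^{(t_1)}$ satisfies $\|z^{(t)} - z^{(t_1)}\|_2^2 < 1$, which forces $t = t_1$. You instead unroll the greedy construction directly and prove by induction that $a_{r,s} = z^{(s)}$ for every $s$, so in fact $d_r = m$ and the index $s$ witnessing $z^{(t)} \in A_r$ is exactly $s = t$. Your route is marginally more explicit and yields a slightly sharper statement about the structure of $A_{2\lg m + 1}$, while the paper's route is a one-line application of the already-established net property; neither approach has any real advantage over the other.
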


\begin{proof}
    Let $t_1$ be the largest integer at most $t$ such that $z^{(t_1)} = a_{2 \lg m + 1, s}$ for some $s.$  Then, $\|z^{(t)}-a_{2 \lg m + 1, s}\|_2^2 \le 2^{-(2 \lg m + 1)} \cdot \|z\|_2^2 < 1,$ which is clearly impossible unless $z^{(t)} = a_{2 \lg m + 1, s}.$
\end{proof}

Next, to prove Equations \eqref{Bound4} and \eqref{Bound5}, we will need the Marcinkiewicz–Zygmund inequality (see for example \cite{KhintchineGeneral}), which is a generalization of Khintchine's inequality (see for example \cite{Khintchine}):

\begin{theorem} \label{KhintchineGeneral}
    For any even $k \ge 2,$ there exists a constant $B_k$ only depending on $k$ such that for any fixed vector $v$ and totally independent and unbiased random variables $\vec{Y} = (Y_1, \dots, Y_n),$
\[\BE\left[\left(\sum\limits_{i = 1}^{n} Y_i\right)^k\right] \le B_k \BE\left[\left(\sum\limits_{i = 1}^{n} Y_i^2\right)^{k/2}\right].\]
    Moreover, $B_k$ grows as $\Theta(k)^{k/2}$.
\end{theorem}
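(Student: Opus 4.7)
The plan is a two-step reduction: first prove the inequality when each $Y_i$ is of the form $a_i\epsilon_i$ for a fixed scalar $a_i$ and an independent Rademacher $\epsilon_i$ (Khintchine's inequality), then reduce the general case to this via symmetrization. Throughout I assume each $Y_i$ has mean zero, which is the standard hypothesis for this result and is the setting in which it is applied elsewhere in the paper.

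For the Rademacher step, expand $(\sum_i a_i\epsilon_i)^k$ by the multinomial theorem; since $\BE[\epsilon_i^b]$ equals $1$ if $b$ is even and $0$ otherwise, only exponent vectors $(b_1, \ldots, b_n)$ with every $b_i$ even contribute. Writing $b_i = 2c_i$ with $\sum_i c_i = k/2$ one obtains
\[
\BE\Bigl[\bigl(\textstyle\sum_i a_i\epsilon_i\bigr)^k\Bigr] = \sum_{\sum_i c_i = k/2} \binom{k}{2c_1, \ldots, 2c_n} \prod_i a_i^{2c_i},
\]
which should be compared with $(\sum_i a_i^2)^{k/2} = \sum \binom{k/2}{c_1, \ldots, c_n} \prod_i a_i^{2c_i}$. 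Term by term the ratio is $\binom{k}{2c_1,\ldots,2c_n}/\binom{k/2}{c_1,\ldots,c_n} = \frac{k!}{(k/2)!}\prod_i \frac{c_i!}{(2c_i)!}$, and since each factor $c_i!/(2c_i)!\le 1$ it is bounded above by $B'_k := k!/(k/2)!$, a constant depending only on $k$. Hence $\BE[(\sum_i a_i\epsilon_i)^k]\le B'_k(\sum_i a_i^2)^{k/2}$.

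For the reduction, let $Y_1', \ldots, Y_n'$ be an independent copy of $Y_1, \ldots, Y_n$. Because $x\mapsto x^k$ is convex for even $k$ and $\BE[\sum_i Y_i'] = 0$, Jensen's inequality gives $\BE[(\sum_i Y_i)^k]\le \BE[(\sum_i (Y_i - Y_i'))^k]$. Each $Y_i - Y_i'$ is symmetric, so the joint law of $(Y_i - Y_i')_i$ is unchanged after multiplying each coordinate by an independent Rademacher $\epsilon_i$; conditioning on the $Y$'s and $Y'$'s and applying the Rademacher bound pointwise with $a_i := Y_i - Y_i'$ yields
\[
\BE\Bigl[\bigl(\textstyle\sum_i(Y_i-Y_i')\bigr)^k\Bigr] = \BE\Bigl[\bigl(\textstyle\sum_i \epsilon_i(Y_i-Y_i')\bigr)^k\Bigr] \le B'_k\,\BE\Bigl[\bigl(\textstyle\sum_i (Y_i-Y_i')^2\bigr)^{k/2}\Bigr].
\]
The elementary estimates $(Y_i - Y_i')^2 \le 2Y_i^2 + 2(Y_i')^2$ and, by convexity, $(x+y)^{k/2}\le 2^{k/2-1}(x^{k/2}+y^{k/2})$ for $x,y\ge 0$, then give $\BE[(\sum_i Y_i)^k]\le 2^k B'_k\,\BE[(\sum_i Y_i^2)^{k/2}]$, which is the claim with $B_k := 2^k B'_k$.

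The only nontrivial point is the combinatorial bookkeeping in the Rademacher bound, where one must check that the ratio of multinomial coefficients is controlled by a constant depending only on $k$, uniformly in $n$ and in the partition $(c_1, \ldots, c_n)$. Once that is in hand, the symmetrization argument and the convexity estimates in the final paragraph are routine.
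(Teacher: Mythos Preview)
The paper does not prove this theorem at all: it is quoted as the Marcinkiewicz--Zygmund inequality with a citation to the literature, and then used as a black box to derive Proposition~\ref{KhintchineCorollary}. So there is no ``paper's own proof'' to compare against.

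Your argument is the standard and correct proof of this inequality. The Rademacher step is fine: the termwise ratio $\binom{k}{2c_1,\ldots,2c_n}\big/\binom{k/2}{c_1,\ldots,c_n} = \frac{k!}{(k/2)!}\prod_i \frac{c_i!}{(2c_i)!}$ is indeed bounded by $k!/(k/2)!$ uniformly in $n$ and the partition. The symmetrization step is also correct, including your explicit flag that mean zero is needed for the Jensen step $\BE[(\sum_i Y_i)^k]\le \BE[(\sum_i(Y_i-Y_i'))^k]$; the paper's statement omits this hypothesis (and contains a stray ``fixed vector $v$'' that never appears in the displayed inequality), but every application in the paper is to centered variables, so your reading of the intended statement is the right one. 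The final convexity estimates chaining $(Y_i-Y_i')^2\le 2Y_i^2+2(Y_i')^2$ through $(x+y)^{k/2}\le 2^{k/2-1}(x^{k/2}+y^{k/2})$ give exactly the factor $2^k$ you claim.
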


This implies the following result:

\begin{proposition} \label{KhintchineCorollary}
    For any even $k \ge 2$ and vector $v \in \BR^n$, there exists a $B_k = \Theta(k)^{k/2}$ such that for any $k$-wise independent unbiased random variables $X_1, \dots, X_n$ with $\BE[X_i^k] \le 1$ for all $i$, and $\vec{X} := (X_1, \dots, X_n),$
\[\BE\left[\langle v, \vec{X}\rangle\right]^k = \BE\left[\left(\sum\limits_{i = 1}^{n} v_i X_i\right)^k\right] \le B_k \|v\|_2^k.\]
\end{proposition}

\begin{proof}
    Since the expected value of $\left(\sum v_iX_i\right)^k$ is only dependent on $k$-wise independence, we can assume that the $X_i$'s are totally independent but have the same marginal distribution. This implies
\[\BE\left[\left(\sum\limits_{i = 1}^{n} v_i X_i\right)^k\right] \le B_k \BE\left[\left(\sum\limits_{i = 1}^{n} v_i^2 X_i^2\right)^{k/2}\right]\]
    by Theorem \ref{KhintchineGeneral}. However, we know that $\BE[X_i^{2d}] \le 1$ for all $i$ and all $1 \le d \le k/2,$ since $\BE[X_i^k] \le 1$ and $\BE[X_i^{2d}]^{k/d} \le \BE[X_i^k]$ by Jensen's inequality, so simply expanding and using independence and linearity of expectation gets us the desired result.
\end{proof}

We now prove Theorems \ref{Bound4} and \ref{Bound5}.

\begin{proof}[Proof of Theorem \ref{Bound4}]
    For $r \ge 1$ and $s,$ suppose $a_{r, s} = z^{(t)}$ and $t_1 \le t$ is the largest index such that $z^{(t_1)} \in A_{r-1}.$  Then, define $f(s, t)$ to be the index $s'$ such that $z^{(t_1)} = a_{r-1, s'}.$  Consider the quadratic form 
\[\sum\limits_{r = 1}^{2 \lg m + 1} \sum\limits_{s = 0}^{d_r} \langle (a_{r, s}-a_{r-1, f(r, s)}), \vec{X} \rangle^2.\]
    By Proposition \ref{EpsilonNet}, $\|a_{r, s}-a_{r-1, f(r, s)}\|_2 \le 2^{-(r-1)/2} \cdot \|z\|_2$. Thus, by Proposition \ref{KhintchineCorollary}, we get the expected value of the quadratic form equals
\[\sum\limits_{r = 1}^{2 \lg m + 1} \sum\limits_{s = 0}^{d_r} \BE[\langle (a_{r, 2s+1}-a_{r, 2s}), \vec{X} \rangle^2] \le B_2 \sum\limits_{r = 1}^{2 \lg m + 1} \sum\limits_{s = 0}^{d_r} \|a_{r, 2s+1}-a_{r, 2s}\|_2^2\]
\[\le B_2 \sum\limits_{r = 1}^{2 \lg m + 1} \left(2^r \cdot 2^{-(r-1)}\|z\|_2^2\right) \le 2B_2(2 \lg m + 1)(\|z\|_2^2).\]
Here, I am using the fact that an $\varepsilon$-net has size at most $\varepsilon^{-2}$, which is easy to see since $z^{(0)}, \dots, z^{(m)}$ is tracking an insertion stream (it is proven, for example, in Theorem 1 of \cite{BPTree}), and thus $d_r \le 2^r$.

Now, for any $0 \le i \le n,$ consider $z^{(i)}$ and let $z^{(i)} = a_{2 \lg m + 1, s}.$  Then, define $s_r = s$ if $r = 2 \lg m + 1$ and $s_{r-1} = f(r, s_r)$ for $1 \le r \le 2 \lg m + 1.$  Note that $s_0 = 0$ and for any $r \ge 1,$ if $a_{r, s_r} \in A_{r-1},$ then $a_{r, s_r} = a_{r-1, s_{r-1}}.$  Thus, each $\langle (a_{r, s_r}-a_{r-1, s_{r-1}}), \vec{X}\rangle^2$ for $1 \le 2 \lg m + 1$ is either $0$ (because $a_{r, s_r}-a_{r-1, s_{r-1}} = 0$) or is a summand in our quadratic form. Therefore, 
\[\sum\limits_{r = 1}^{2 \lg m + 1} \sum\limits_{s = 0}^{d_r} \langle (a_{r, s}-a_{r, f(r, s)}), \vec{X} \rangle^2 \ge \sum\limits_{r = 1}^{2 \lg m + 1} \langle (a_{r, s_r}-a_{r-1, s_{r-1}}), \vec{X}\rangle^2 \ge \frac{1}{2 \lg m + 1} \cdot \langle z^{(i)}, \vec{X}\rangle^2,\]
with the last inequality true since $a_{2 \lg m + 1, s_{2 \lg m + 1}} = z^{(i)}$, $a_{0, s_0} = z^{(0)},$ and by the Cauchy-Schwarz inequality. As this is true for all $i$, taking the supremum over $i$ and then expected values gives us
\[2B_2(2 \lg m + 1)(\|z\|_2^2) \ge \BE\left[\sum\limits_{r = 1}^{2 \lg m + 1} \sum\limits_{s = 0}^{d_r} \langle (a_{r, 2s+1}-a_{r, 2s}), \vec{X} \rangle^2\right] \ge \frac{1}{2 \lg m + 1} \cdot \sup_i \BE\left[\langle z^{(i)}, \vec{X}\rangle^2\right],\]
and therefore,
\[\BE\left[\langle z^{(i)}, \vec{X}\rangle^2\right] = O\left(\|z\|_2^2 \cdot \lg^2 m\right). \qedhere\]
\end{proof}

\begin{proof}[Proof of Theorem \ref{Bound5}]
    Consider the form 
\[\sum\limits_{r = 1}^{2 \lg m + 1} 2^{k \cdot r/8} \sum\limits_{s = 0}^{d_r} \langle (a_{r, s}-a_{r-1, f(r, s)}), \vec{X} \rangle^k,\]
    with $f(r, s)$ defined as in the proof of Equation \eqref{Bound4}. We again note that by Proposition \ref{EpsilonNet}, $\|a_{r, s}-a_{r-1, f(r, s)}\|_2 \le 2^{-(r-1)/2} \cdot \|z\|_2$. Thus, by Proposition \ref{KhintchineCorollary}, we get the expected value of the form equals
\[\sum\limits_{r = 1}^{2 \lg m + 1} 2^{k \cdot r/8} \sum\limits_{s = 0}^{d_r} \BE[\langle (a_{r, s}-a_{r-1, f(r, s)}), \vec{X} \rangle^k] \le B_k \sum\limits_{r = 1}^{2 \lg m + 1} 2^{k \cdot r/8} \sum\limits_{s = 0}^{d_r} \|a_{r, s}-a_{r-1, f(r, s)}\|_2^k\]
\[\le B_k \sum\limits_{r = 0}^{2 \lg m + 1} 2^{k \cdot r/8} \cdot 2^r \cdot 2^{-(r-1)k/2} \|z\|_2^k \le B_k \cdot 2^{k/2} \sum\limits_{r = 0}^\infty 2^{r(8-3k)/8} \|z\|_2^k = O(2^{k/2} B_k) \|z\|_2^k,\]
since $k \ge 4.$  Again, I am using the fact that $d_r \le 2^r$ as an $\varepsilon$-net has size at most $\varepsilon^{-2}.$

Now, for any $0 \le i \le n,$ suppose $s$ satisfies $z^{(i)} = a_{2 \lg m + 1, s}.$  define $s_r = s$ if $r = 2 \lg m + 1$ and $s_{r-1} = f(r, s_r)$ for $1 \le r \le 2 \lg m + 1.$  Then, similarly to in the proof of Equation \eqref{Bound4},
\[\sum\limits_{r = 1}^{2 \lg m + 1} 2^{k \cdot r/8} \sum\limits_{s = 0}^{d_r} \langle (a_{r, s}-a_{r-1, f(r, s)}), \vec{X} \rangle^k \ge \sum\limits_{r = 1}^{2 \lg m + 1} 2^{k \cdot r/8} \langle (a_{r, s_r}-a_{r-1, s_{r-1}}), \vec{X}\rangle^k \ge (1-2^{-1/8})^k \cdot \langle z^{(i)}, \vec{X}\rangle^k.\]
The last inequality follows from Lemma \ref{EasyInequality}, by setting $x_r = \langle (a_{r, s}-a_{r-1, f(r, s)}), \vec{X}\rangle.$


As this is true for all $i$, we can take the supremum over $i$ and then take expected values to get
\[2^{k/2} B_k \cdot \|z\|_2^k \gtrsim \BE\left[\sum\limits_{r = 1}^{2 \lg m + 1} 2^{r/2} \sum\limits_{s = 0}^{d_r} \langle (a_{r, s}-a_{r-1, f(r, s)}), \vec{X} \rangle^k\right] \ge c^k \cdot \sup_i \BE\left[\langle z^{(i)}, \vec{X}\rangle^k\right]\]
for some fixed constant $c > 0.$ Therefore, for a fixed $k$,
\[\BE \sup\limits_{i} \left[\langle z^{(i)}, \vec{X}\rangle^k\right] = O\left(\|z\|_2^k\right),\]
and for variable $k$, as $B_k = \Theta(k)^{k/2}$, we have
\[\BE \sup\limits_{i} \left[\langle z^{(i)}, \vec{X}\rangle^k\right] = O\left(k^{1/2} \cdot ||z||_2\right)^k. \qedhere\]
\end{proof}

\section*{Acknowledgments}
This research was funded by the PRISE program at Harvard and the Herchel Smith Fellowship. I would like to thank Prof. Jelani Nelson for advising this work, as well as for problem suggestions, forwarding me many papers from the literature, and providing helpful feedback on my writeup. Finally, I would like to thank Prof. Scott Sheffield for explaining the Skorokhod embedding theorem and its application to the generalization of Kolmogorov's inequality.

\appendix
\section{Bounds on Certain Well-Known Pairwise Independent Random Walks} \label{Appendix}

There exist various well-known pairwise independent hash functions from $[n]$ to $\{\pm 1\}.$ Here, we consider a function generated by a Hadamard matrix and a function generated by a random linear function from $\BZ/p \BZ \to \BZ/p \BZ,$ where $p$ is a prime.

\subsection{Hadamard Matrices} \label{Hadamard}

For $n = 2^k,$ the $n \times n$ Hadamard matrix is a matrix $H$ with each entry equal to $1$ or $-1.$ Suppose we write each integer $0 \le i < n$ as a vector in $\{0, 1\}^k$ corresponding to the binary expansion of $i$. Then, $H_{ij},$ or the element in the $i$th row and $j$th column of $H$ (where we $0$-index the rows and columns) satisfies $H_{ij} = (-1)^{\langle i, j\rangle},$ where $\langle i, j\rangle$ equals the inner product of $i$ and $j$ when written as vectors in $\{0, 1\}^k.$ Now, if we define $h_i: [n-1] \to \{\pm 1\}$ as $h_i(j) := H_{ij},$ then it is well-known and straightforward to verify that drawing $h$ randomly from $\{h_0, h_1, \dots, h_{n-1}\}$ gives us that $\BE_h[h(j)] = 0$ for all $1 \le j \le n-1$ and $\BE_h[h(j) h(j')] = 0$ for all $1 \le j, j' \le n-1,$ $j \neq j'.$ (Note that $h_i(0) = 1$ always, so we make sure that $j \ge 1$ rather than $j \ge 0.$)

We claim that the induced pairwise independent random walk does not satisfy $\BE_h[\sup_i (h(1) + \cdots + h(j))] = \Omega(\sqrt{n} \log n).$ In fact, we have the following:

\begin{proposition}
    Suppose $h$ is drawn uniformly from $h_i$ where $0 \le i \le n-1.$ Then,
\[\BE_{h} \left[\sup\limits_{j} |h(1) + \cdots + h(j)|\right] = \Theta(\log n).\]
\end{proposition}

\begin{proof}
    Since $h(0) = 1$ always, we have that $h(0) + \cdots + h(j) = 1 + h(1) + \cdots + h(j)$ for all $j$. Therefore, it suffices to show that 
\[\BE_{h} \left[\sup\limits_{j} |h(0) + \cdots + h(j)|\right] = \Theta(\log n).\]

    Assume that $h = h_i$ where $i \neq 0,$ and let $0 \le v_2(i) \le k-1$ represent the largest integer $\ell$ such that $2^{\ell}|i.$ Then, note that for any integer $0 \le c < 2^{k-v_2(i)-1},$ we have that 
\[h_i(c \cdot 2^{v_2(i)+1}) = \cdots = h_i(c \cdot 2^{v_2(i)+1} + 2^{v_2(i)}-1) \neq h_i(c \cdot 2^{v_2(i)+1} + 2^{v_2(i)}) = \cdots = h_i(c \cdot 2^{v_2(i)+1} + 2^{v_2(i)+1}-1).\]
    This follows from the fact that $i$, when written in binary, has its last $v_2(i)$ coordinates equal to $0$ but its $(v_2(i)+1)$th to last coordinate equal to $1$. As a result, we have that if $2^{v_2(i)+1}|j,$ then $h(0) + \cdots + h(j) = 0.$ Therefore, as $h(j) \in \{\pm 1\}$ for all $j$, we have that $|h(0) + \cdots + h(j)|$ achieves its maximum when $v_2(j) = v_2(i),$ and equals $2^{v_2(i)}.$ Finally, if $h = h_0,$ then $|h(0) + \cdots + h(j)| \le |h(0) + \cdots + h(n-1)| = n.$
    
    Therefore, with probability $\frac{1}{n},$ we have $\sup_j |h(1) + \cdots + h(j)| = n.$ Also, for each $0 \le \ell \le k-1,$ we have $v_2(i) = \ell$ with probability $2^{-\ell-1},$ in which case $\sup_j |h(1) + \cdots + h(j)| = 2^{\ell}.$ Therefore, we have
\[\BE_h\left[\sup\limits_{j} |h(0) + \cdots + h(j)|\right] = n \cdot \frac{1}{n} + \sum\limits_{\ell = 0}^{k-1} \left(2^{-\ell-1} \cdot 2^{\ell}\right) = 1 + \frac{k}{2} = \Theta(\log n). \qedhere\]
\end{proof}

\subsection{Linear Functions on Finite Fields} \label{Kloosterman}

It is well-known that for any finite field $\BF_q$ the family $f_{a, b}(x): x \mapsto ax+b$ where $a, b$ are randomly chosen from $\BF_q$ is pairwise independent, with $\BP_{a, b}(f_{a, b}(x_1) = y_1, f_{a, b}(x_2) = y_2) = \frac{1}{q^2}$ for all $x_1 \neq x_2 \in \BF_q$ and $y_1, y_2 \in \BF_q.$ As a result, we can create the following function from $[q] \to \{0, \pm 1\}.$ First, create some ordering of the elements of $\BF_q,$ giving us a bijection $g: [q] \to \BF_q.$ Next, if $q$ is a power of $2$, choose some subset $S \subset \BF_q$ of size $q/2$ that will map to $1$ (and the rest will map to $-1$). If $q$ is odd, we will say that $0 \in \BF_q$ maps to $0$ and we choose some subset $S \subset \BF_q \backslash \{0\}$ of size $\frac{q-1}{2}$ to map to $1$. Thus, our final map $f: [q] \to \{0, \pm 1\}$, given $a, b$, will map $x \in [q]$ to $1$ if $a \cdot g(x) + b \in S,$ to $0$ if $q$ is odd and $a \cdot g(x)+b = 0,$ and to $-1$ otherwise. While we really want a random walk with steps in $\{\pm 1\},$ we note that $f_{a, b}$ maps at most one element $x \in \BF_q$ to $0$ unless $a = b = 0,$ this will not affect the expectation of the supremum of the random walk more than $O(1).$

We note that providing general bounds on these random walks is quite difficult, so we will look at the special case where $q$ is an odd prime, the bijection $g$ is reduction modulo $q$, and $S = \left\{1, 2, \dots, \frac{q-1}{2}\right\}.$ In this case, we let $f_{a, b}: [q] \to \{0, \pm 1\}$ be the function such that
\[f(x) = \begin{cases} 0 & ax+b \equiv 0 \Mod q \\ 1 & ax + b \equiv s \Mod q, s \in S \\ -1 & \text{else} \end{cases} \]

In this case, we have the following result, which demonstrates that $f$ will not give a random walk with expected supremum distance $\omega(\sqrt{n})$.

\begin{proposition}
    We have that
\[\BE_{a, b} \left[\sup\limits_{k} \left|f_{a, b}(1) + \cdots + f_{a, b} (k)\right|\right] = O(\log^2 q).\]
\end{proposition}

\begin{remark}
    The solution below follows a very similar approach to approximating the number of solutions to $ab \equiv c \Mod q$ where $c$ is fixed $a, b$ are known to be in some fixed intervals contained in $\BZ/q\BZ,$ using bounds on the Kloosterman sums \cite{Merel96}.
\end{remark}

\begin{proof}
    For simplicity of notation, we will use notation commonly seen in analytic number theory. Namely, for $x \in \BR,$ we will write $e(x) := e^{2 \pi i x}$ and $\|x\| := \min(x - \lfloor x \rfloor, \lceil x \rceil - x),$ or the closest distance between $x$ and some integer.

    Consider the function $h: \BZ/q\BZ \to \BR$ where $h(0) = 0,$ $h(s) = 1$ if $s \in S,$ and $h(s) = -1$ otherwise. We consider the Discrete Fourier Transform of $h$, 
\[\widehat{h}(s) = \sum\limits_{i = 0}^{q-1} h(i) e\left(-\frac{i \cdot s}{q}\right).\]
    It is clear that $\widehat{h}(0) = 0$ and that for $s \neq 0,$
\begin{align*}
|\widehat{h}(s)| &\le \left|\sum\limits_{i = 1}^{(q-1)/2} e\left(-\frac{i \cdot s}{q}\right)\right| + \left|\sum\limits_{i = (q+1)/2}^{q-1} e\left(\frac{i \cdot s}{q}\right)\right| \\
&= 2 \left|\frac{e\left(-\frac{s \cdot (q-1)}{2q}\right)-1}{e\left(-\frac{s}{q}\right)-1}\right| \\
&\le 4 \cdot \frac{1}{\left|e\left(-\frac{s}{q}\right)-1\right|} \\
&= O\left(\frac{1}{\|s/q\|}\right).
\end{align*}

Now, using the discrete Fourier inversion formula, if $a \neq 0,$ we can write
\begin{align*}
    \left|f_{a, b}(1) + \cdots + f_{a, b}(k)\right| &= \left|\sum\limits_{j = 1}^{k} h(j \cdot a + b)\right| \\
    &= \frac{1}{q} \left|\sum\limits_{i = 0}^{q-1} \sum\limits_{j = 1}^{k} \widehat{h}(i) e\left(i \cdot \frac{j \cdot a + b}{q}\right)\right| \\
    &\lesssim \frac{1}{q} \sum\limits_{i = 1}^{q-1} \frac{1}{\|i/q\|} \cdot \left|\sum\limits_{j = 1}^{k} e\left(\frac{(i \cdot a) \cdot j + (i \cdot b)}{q}\right)\right| \\
    &= \frac{1}{q} \sum\limits_{i = 1}^{q-1} \frac{1}{\|i/q\|} \cdot \left|\frac{e\left(\frac{(i \cdot a) \cdot k}{q}\right) - 1}{e\left(\frac{i \cdot a}{q}\right) - 1}\right| \\
    &\lesssim \frac{1}{q} \sum\limits_{i = 1}^{q-1} \frac{1}{\|i/q\|} \cdot \frac{1}{\|i \cdot a/q\|}.
\end{align*}
Note that the final expression is independent of $k$, so we can take a supremum over all $k$. Also, the final expression is independent of $b$. Summing over all values of $b \in \BZ/q \BZ$ and all $a \in (\BZ/q \BZ) \backslash \{0\}$ gives us
\begin{align*}
\sum\limits_{a \neq 0, b} \sup_k \left|f_{a, b}(1) + \cdots + f_{a, b}(k)\right| &\lesssim \sum\limits_{a = 1}^{q-1} \sum\limits_{i = 1}^{q-1} \frac{1}{\|i/q\|} \cdot \frac{1}{\|i \cdot a/q\|} \\
&= \sum\limits_{c = 1}^{q-1} \sum\limits_{i = 1}^{q-1} \frac{1}{\|i/q\|} \cdot \frac{1}{\|c/q\|} \\
&= \left(2 \sum\limits_{i = 1}^{(q-1)/2} \frac{1}{i/q}\right)^2 \\
&\lesssim q^2 \log^2 q.
\end{align*}

    Finally, for $a = 0,$ we have $q$ possibilities of $b$ and we clearly have $\sup |f_{a, b}(1) + \cdots + f_{a, b}(k)| \le q$ as the image of $f$ is contained in $\{0, \pm 1\}.$ Therefore,
\[\sum\limits_{a, b} \left(\sup\limits_{k} |f_{a, b}(1) + \cdots + f_{a, b}(k)|\right) \lesssim q^2 + q^2 \log^2 q \lesssim q^2 \log^2 q.\]
    Thus, taking the expectation over $a, b,$ gives us 
\[\BE_{a, b} \left(\sup\limits_{k} |f_{a, b}(1) + \cdots + f_{a, b}(k)|\right) = O(\log^2 q),\]
    as desired.
\end{proof}

\section{Final Case of Theorem \ref{Main}} \label{FinishFullTheorem}

Here, we prove the final case of Theorem \ref{Main}.

\begin{proposition}
    Let $q \ge p \ge 1,$ $k \ge 2$. Suppose that either $2 \le k \le 3$ and $p \ge 3$, or $k \ge 4$ and either $p > 2 \lfloor \frac{k}{2} \rfloor$ or $p = q = 1$. Then, if $n$ is sufficiently large, there exist unbiased $k$-wise independent random variables $X_1, \dots, X_n$ with $\BE[|X_i|^q] \le 1$ for all $i$ such that
\[\BE\left[|X_1 + \dots + X_n|^p\right] = \Omega(n)^{(p+1)/2}.\]
\end{proposition}

\begin{proof}
    Note that $2 \le k \le 3$ and $p \ge 3$ implies $p > 2 \lfloor \frac{k}{2}\rfloor.$ Therefore, we'll first consider the case of $k \ge 2$ and $p \ge 2 \lfloor \frac{k}{2} \rfloor + 1.$ The variables we will deal with will be $0$ or $\pm 1$ random variables, so $\BE[|X_i|^q] \le 1$ for all $q \ge 1.$
    
    Suppose $n$ is an odd prime satisfying $n \ge 2k$. Let $f: \BF_n \to \BF_n$ be a randomly chosen polynomial of degree at most $k-1,$ i.e. $f(x) = \sum_{i = 0}^{k-1} a_i x^i,$ where $a_0, \dots, a_{k-1}$ are randomly chosen from $\BF_n = \BZ/n\BZ.$ Now, for $1 \le i \le n,$ let $X_i$ be the random variable which equals $1$ if $f(i)$ is a nonzero quadratic residue modulo $n$, $-1$ if $f(i)$ is a quadratic nonresidue modulo $n$, and $0$ of $f(i) = 0.$ It is well known that $f(1), \dots, f(n)$ are $k$-wise independent, so we therefore have that $X_1, \dots, X_n$ are $k$-wise independent. However, there are $n^{\lceil k/2 \rceil}$ polynomials of degree at most $\frac{k-1}{2},$ which means that the number of polynomials $f$ of degree at most $k-1$ which are squares over $\BF_p[x]$ is $\Theta(n^{\lceil k/2 \rceil}),$ since no polynomial can have more than $2$ square roots. In this case, $X_1 + \dots + X_n \ge n-k \ge \frac{n}{2}$ since $n \ge 2k,$ since $f(i)$ is always a quadratic residue, and can be $0$ for at most $k$ values of $i$. Therefore, we have that
\[\BE\left[|X_1 + \dots + X_n|^p\right] \ge \frac{\Theta(n^{\lceil k/2 \rceil})}{n^k} \cdot \Omega(n)^p = \Omega(n)^{p-\lfloor k/2\rfloor}.\]
    However, since $p \ge 2 \lfloor \frac{k}{2} \rfloor + 1,$ we have that $p-\lfloor \frac{k}{2} \rfloor \ge \frac{p+1}{2}$. 
    
    Now, if $n$ is not prime but $n \ge 4k.$ we can choose $n'$ prime so that $2k \le \frac{n}{2} \le n' \le n$ by Bertrand's postulate, and choose random variables $X_1, \dots, X_{n'}$ as in the previous paragraph, and have $X_{n'+1}, \dots, X_n = 0$.
    
    For the case of $p = q = 1$ and $n \ge 1,$ we let $X_1, \dots, X_n$ be i.i.d. drawn from $X$, where $X = 0$ with probability $1 - \frac{1}{n},$ $X = n$ with probability $\frac{1}{2n},$ and $X = -n$ with probability $\frac{1}{2n}.$ It is clear that $\BE[|X|] = 1.$ However, if we have samples $X_1, \dots, X_n,$ exactly one of these samples will be nonzero with probability $n \cdot (1 - \frac{1}{n})^{n-1} \cdot \frac{1}{n},$ which converges to $\frac{1}{e}$ as $n \to \infty.$ In this case, $|X_1 + \dots + X_n| = n.$ Therefore, we have that $\BE[|X_1 + \dots + X_n|] \ge (\frac{1}{e}-o(1)) \cdot n = \Omega(n)$, which concludes the proof.
\end{proof}

\section{Proof of Theorem \ref{Bound3} without the $4$-wise independence criterion} \label{Sheffield}

Here, we prove a weaker version of Theorem \ref{Bound3} where $X_1, \dots, X_n$ are assumed to be totally independent as a direct corollary of the Skorokhod Embedding Theorem.

First, we state the Skorokhod Embedding Theorem (see, for instance, \cite[Theorem 37.7]{Skorokhod}).

\begin{theorem} \label{Sko}
    Let $X_1, \dots, X_n$ be independent unbiased random variables with finite variances, and let $W$ be a Brownian motion process in $1$ dimension. For $1 \le i \le n,$ let $S_i = X_1 + \dots + X_i$ Then, there exists a series of stopping times $tau_0 := 0$ and $0 \le \tau_1 \le \tau_2 \le \dots \le \tau_n$ with respect to the filtration of $W$, such that for all $1 \le i \le n,$ the joint distribution $(W_{\tau_1}, \dots, W_{\tau_n})$ is the same as the distribution $(S_1, \dots, S_n)$, and $\BE[\tau_i-\tau_{i-1}] = \BE[X_i^2].$
\end{theorem}

We will also need the Reflection Principle of $1$-dimensional Brownian motion (see, for instance, \cite[Section 4.2]{ReflectionPrinciple}).

\begin{theorem} \label{Reflection}
    If $W$ is a Brownian motion process in $1$ dimension, then for any time $\tau,$ the distribution of $\sup_{0 \le t \le \tau} W(t)$ has the same distribution as $|W(\tau)|.$
\end{theorem}

We now can prove the weaker version of Theorem \ref{Bound3}.

\begin{proof}
    Let all notation be as in Theorem \ref{Sko}. Then, $\max |S_i|$ has the same distribution as $\max |W_{\tau_i}|.$ Thus, $\BE[\max |S_i|^2] = \BE[\max |W_{\tau_i}|^2] \le \BE[\sup_{0 \le t \le \tau_n} |W_t|^2].$ Now, since $\sup |W_t| \le \sup W_t + \sup (-W_t),$ and since $\sup W_t$ and $\sup (-W_t)$ have the same distribution, we have that
\[\BE\left[\sup_{0 \le t \le \tau_n} |W_t|^2\right] \le \BE\left[\left(\sup_{0 \le t \le \tau_n} W_t + \sup_{0 \le t \le \tau_n} (-W_t)\right)^2\right] \le 4 \BE\left[\left(\sup_{0 \le t \le \tau_n} W_t\right)^2\right].\]
    However, by Theorem \ref{Reflection}, $\sup_{0 \le t \le T} W_t$ has the distribution $|Z|$ where $Z \sim \mathcal{N}(0, T).$ Therefore, conditioning on $\tau_1, \dots, \tau_n,$ we have that 
\[\BE\left[\left(\sup_{0 \le t \le \tau_n} W_t\right)^2\biggr\vert \tau_1, \dots, \tau_n\right] = \BE\left[|\mathcal{N}(0, \tau_n)|^2\right] = \tau_n.\]
    However, by Theorem \ref{Sko}, $\BE[\tau_n] = \BE[X_1^2 + \dots + X_n^2].$ Therefore, we have that
\[\BE\left[\max_{1 \le i \le n} |S_i|^2\right] \le 4 \cdot \sum_{i = 1}^{n} \BE[X_i^2]. \qedhere\]
\end{proof}

\end{document}